\newtheorem{theorem}{Theorem}[section]
\newtheorem{proposition}[theorem]{Proposition}
\newtheorem{lemma}[theorem]{Lemma}
\theoremstyle{definition}
\newtheorem{definition}[theorem]{Definition}
\theoremstyle{remark}
\numberwithin{equation}{section}
\def\norm#1{|\!| #1 |\!|}
\begin{document}

\title[A Nonconforming FEM for Fourth Order Curl Equations in $\mathbb{R}^3$]{A Nonconforming Finite Element Method for Fourth Order Curl Equations in $\mathbb{R}^3$}

%    Information for first author
\author{Bin Zheng}
%    Address of record for the research reported here
\address{Center for Computational Mathematics and
Applications, Department of Mathematics,
The Pennsylvania State University,
University Park, PA 16802}
%    Current address
\curraddr{Department of Mathematics,
University of Maryland, College Park, MD 20742}
\email{bzheng@math.umd.edu}
%    \thanks will become a 1st page footnote.
%\thanks{The first author was supported in part by }

%    Information for second author
\author{Qiya Hu}
\address{LSEC, Institute of Computational Mathematics and Scientific/Engineering Computing, Academy of Mathematics and Systems Science, Chinese Academy of Sciences, Beijing 100080, China.}
\email{hqy@lsec.cc.ac.cn}
\thanks{The second author was supported by The Key Project of Natural Science Foundation of China G10531080, National Basic Research Program of China No. 2005CB321702 and Natural Science Foundation of China G10771178.}

\author{Jinchao Xu}
\address{Center for Computational Mathematics and Applications, Department of Mathematics, The Pennsylvania State University, University Park, PA 16802.}
\email{xu@math.psu.edu}
\thanks{The third author was supported by the National Science Foundation under contract DMS-0609727 and DMS-0915153, and Center for Computational Mathematics and Applications, Penn State University.}

%    General info
\subjclass[2000]{65N30}

\date{January 29, 2010}

%\dedicatory{This paper is dedicated to .}

\keywords{Finite element, fourth order, magnetohydrodynamics}

\begin{abstract}
In this paper we present a nonconforming finite element method for solving fourth order curl equations in three dimensions arising from magnetohydrodynamics models. We show that the method has an optimal error estimate for a model problem involving both $(\nabla\times)^2$ and $(\nabla\times)^4$ operators. The element has a very small number of degrees of freedom and it imposes the inter-element continuity along the tangential direction which is appropriate for the approximation of magnetic fields. We also provide explicit formulae of basis functions for this element.
\end{abstract}

\maketitle

\section{Introduction}
The magnetohydrodynamics (MHD) equations describe
macroscopic dynamics of electrically conducting fluid that moves in a magnetic field. MHD model is governed by Navier-Stokes equations coupled with Maxwell equations through Ohm's law and Lorentz force. As an example, a resistive MHD system is described by the following equations:
\begin{equation*}
\left\{
\begin{array}{rcl}
\rho(\mathbf{u}_t+\mathbf{u}\cdot \nabla \mathbf{u})+\nabla p &=& \frac{1}{\mu_0}(\nabla\times \mathbf{B})\times \mathbf{B} + \mu\Delta \mathbf{u},\\
\nabla\cdot \mathbf{u}  & = & 0,\\
\mathbf{B}_t  - \nabla\times(\mathbf{u}\times \mathbf{B}) & = & -\frac{\eta}{\mu_0}(\nabla\times)^2 \mathbf{B} - \frac{d_i}{\mu_0}\nabla\times((\nabla\times \mathbf{B})\times\mathbf{B}) \\
&& - \frac{\eta_2}{\mu_0}(\nabla\times)^4 \mathbf{B}, \\
\nabla\cdot \mathbf{B}  & = & 0,
\end{array}
\right.
\end{equation*}
where $\rho$ is the mass density, $\mathbf{u}$ is the velocity, $p$ is the pressure, $\mathbf{B}$ is the magnetic induction field, $\eta$ is the resistivity, $\eta_2$ is the hyper-resistivity, $\mu_0$ is the magnetic permeability of free space, and $\mu$ is the viscosity. The primary variables in MHD equations are fluid velocity $\mathbf{u}$ and magnetic field $\mathbf{B}$.

MHD model has widespread applications in thermonuclear fusion, magnetospheric and solar physics, plasma physics, geophysics, and astrophysics. Mathematical modeling and numerical simulations of MHD have attracted much research effort in the past few decades. Various numerical algorithms have been used in MHD simulations; examples include finite difference methods, finite volume methods, finite element methods, and Fourier-based spectral and pseudo-spectral methods \cite{Toth:1996gd}. In \cite{Jardin:2004qo,Jardin:2005dq, Kang:2008ta,Krzeminski:2000la,Ovtchinnikov:2007kb}, two-dimensional, incompressible MHD problems are studied in terms of finite element approximations of the stream function-vorticity advection formulation. Since MHD flow often develop sharp interfaces, adaptive $h$-refinement techniques have been applied in MHD simulations \cite{Lankalapallia:2007ye, Strauss:1998kc, Ziegler:2003gf}. Finite element computations of MHD problems in three-dimensions have been reported in \cite{Codina:2006zv, Gerbeau:2000lo, Layton:1997hs,Salah:2001fh,Schotzau:2004sw, Wiedmer:1999pr}.

In the existing finite element discreitzations for the above MHD model, a standard pair of stable or stabilized finite element spaces are often used to discretize the velocity and pressure variables in the fluid equations. For the magnetic field variable $\mathbf{B}$, however, at least two approaches are possible when the fourth order term $(\nabla\times)^4\mathbf{B}$ is not presented in the model, namely when electron viscosity $\eta_2=0$. One approach is to use the standard edge element (\cite{Schotzau:2004sw}) and the other approach is to use the Lagrange element after replacing $(\nabla\times)^2\mathbf{B}$ by $-\Delta \mathbf{B}$ (\cite{Gerbeau:2000lo, Salah:2001fh,Wiedmer:1999pr}). Both these approaches will become more difficult when the fourth order term $(\nabla\times)^4\mathbf{B}$ is presented. We may still replace $(\nabla\times)^4 \mathbf{B}$ by a biharmonic operator $\Delta^2 \mathbf{B}$. But discretizing a biharmonic operator in three dimensions is challenging. It requires $220$ degrees of freedom per element if a conforming finite element is used. One possible way to reduce the number of degrees of freedom is to use nonconforming discretizations which allow weaker inter-element smoothness constraints but still provid convergent approximations. Among the class of nonconforming finite elements for fourth order problems, Morley-type elements are special in the sense that they provide approximations with polynomials of minimal degree \cite{ Morley:1968ve, Wang:2006qr}. In \cite{Wang:2006bh}, a systematic construction of Morley-type elements is provided for solving $2m$-th order partial differential equations in $\mathbb{R}^n$.
In particular, we may apply the element in \cite{Wang:2006bh} with $n=3$ and $m=2$ consisting of piecewise quadratic elements to our system of biharmonic equations. This amounts to $30$ degrees of freedom on each element.  This element provides a reasonable discretization of MHD equations when it
is appropriate to replace $(\text{curl})^4$ operator by the biharmonic operator.  This approach, however, may lead to difficulty for certain boundary conditions in practical applications. Indeed the treatment of boundary conditions is also an issue for the second order problem if $(\nabla\times)^2\mathbf{B}$ is replaced by $-\Delta\mathbf{B}$ \cite{Guermond:2003fk}.

One more natural approach is to discretize the fourth order curl operator by some generalized higher order edge elements. But such type of edge elements are not available in the literature. The construction of such type of edge element is the main goal of this paper.

Another possible approach to deal with the fourth order term $(\nabla\times)^4\mathbf{B}$ is to use operator splitting technique. Namely, one can introduce an intermediate variable $\sigma = (\nabla\times)^2\mathbf{B}$ and then reduce the original problem to a system of second order equations. However, it is known that for some problems, such a technique cannot be applied. For example, when modeling the bending of simply supported plate on non-convex polygonal domains, the original biharmonic problem is not equivalent to the lower order system of two Poisson
equations \cite{Blum:1980ly, Rannacher:1979zr}. In view of this, we consider discretizing the fourth order problem directly.

In this paper, we investigate MHD equations that contain both fourth-order term and second-order term. In the literature, the major tool used for performing MHD simulations involving a fourth order equation has been the pseudo-spectral method \cite{Biskamp:1995it}. By choosing an appropriate formulation, we are able to construct a finite element approximation for this problem. This is a nonconforming finite element
that involves only a small number of degrees of freedom.

The rest of this paper is organized as follows. In Section 2, we
describe a simplified model problem and the corresponding variational formulation. In Section 3, we construct basis functions and provide the convergence analysis. Finally, in Section 4, we give some concluding remarks.

\section{Model Problem}
In the following, we introduce model problems for the fourth-order magnetic induction equations described above. Assume that $\Omega\subset \mathbb{R}^3$ is a bounded polyhedron. By considering a semi-discretization in time and then ignoring the nonlinear terms, we obtain the following equations:
\begin{equation}
\left\{
\begin{aligned}
\alpha(\nabla\times)^4 \mathbf{u} + \beta(\nabla\times)^2 \mathbf{u} +\gamma \mathbf{u}&=\mathbf{f},\;\text{in}\;\Omega,\\
\nabla\cdot\mathbf{u} & = 0,\;\text{in}\;\Omega,
\end{aligned}
\right.\label{four_curl_bvp}
\end{equation}
where $\nabla\cdot\mathbf{f}= 0$, and the parameters $\alpha, \beta, \gamma > 0 $. We consider homogeneous boundary conditions,
\begin{equation}
\mathbf{u}\times \mathbf{n}= 0,\;\nabla\times \mathbf{u}= 0,\;\text{on}\;\partial\Omega.
\end{equation}

The above choice of boundary conditions arise naturally in the variational formulation given below. On the other hand, in the numerical simulations of the problem with pseudo-spectral method, one often uses periodic boundary conditions, e.g., \cite{Biskamp:1995it,Germaschewski:1999yq}.

It is worth pointing out that the parameter $\alpha$ is usually much smaller than either $\beta$ or $\gamma$. This fact imposes some difficulties in designing robust numerical methods, as have been studied in the context of biharmonic problems, e.g., \cite{Nilssen:2001zr,Wang:2006ly}.

The above fourth-order curl equations also arise from an interior transmission problem in the study of inverse scattering problems for inhomogeneous medium, e.g., \cite{Cakoni:2007yq}.

In order to provide an appropriate framework for our analysis, we define the following function spaces:
$$
H(\text{curl};\Omega)=\{\mathbf{v}\in (L^2(\Omega))^3 \; | \;\nabla\times \mathbf{v}\in (L^2(\Omega))^3\},
$$
$$
H_0(\text{curl};\Omega) = \{\mathbf{v}\in H(\text{curl};\Omega)\; | \;\mathbf{v}\times \mathbf{n} = 0, \text{on}\;\partial\Omega\},
$$
$$
V=\{\mathbf{v}\in H_0(\text{curl};\Omega)\; |\; \nabla\times \mathbf{v}\in H_0^1(\Omega)\}.
$$
$V$ is a Hilbert space with scalar product and norm given by
$$
(\mathbf{u},\mathbf{v})_{V}\triangleq (\nabla(\nabla\times \mathbf{u}),\nabla (\nabla\times
\mathbf{v}))+(\nabla\times \mathbf{u},\nabla\times \mathbf{v})+(\mathbf{u},\mathbf{v}),
$$
$$
\norm{\mathbf{u}}_{V}\triangleq \sqrt{(\mathbf{u},\mathbf{u})_{V}}.
$$

The following lemma gives a sufficient condition for a piecewisely defined function to be an element in $V$.

\begin{lemma}
If $\mathbf{v}$ is piecewise smooth, $\mathbf{v}\times \mathbf{n}$ and $\nabla\times \mathbf{v}$ are continuous across element interfaces, then
$\mathbf{v}\in V$.
\end{lemma}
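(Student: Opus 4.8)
The plan is to verify directly the three conditions built into the definition of $V$: that $\mathbf{v}\in H(\text{curl};\Omega)$, that the tangential trace $\mathbf{v}\times\mathbf{n}$ vanishes on $\partial\Omega$ (so that $\mathbf{v}\in H_0(\text{curl};\Omega)$), and that $\nabla\times\mathbf{v}\in (H_0^1(\Omega))^3$. Since $\mathbf{v}$ is piecewise smooth, the element-wise curl $\mathbf{w}$, defined by $\mathbf{w}|_K=\nabla\times(\mathbf{v}|_K)$ on each element $K$, is automatically in $(L^2(\Omega))^3$, and likewise its piecewise gradient is square integrable; the whole difficulty is to show that this piecewise curl is genuinely the distributional curl of $\mathbf{v}$ on all of $\Omega$, and then to repeat the argument one order higher for $\nabla\times\mathbf{v}$.

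For the $H(\text{curl})$ membership I would take an arbitrary test field $\mathbf{g}\in (C_0^\infty(\Omega))^3$ and apply the element-wise Green's formula for the curl,
\[
\int_K (\nabla\times\mathbf{v})\cdot\mathbf{g}\,dx
=\int_K \mathbf{v}\cdot(\nabla\times\mathbf{g})\,dx
+\int_{\partial K}(\mathbf{n}_K\times\mathbf{v})\cdot\mathbf{g}\,ds,
\]
and then sum over all elements $K$. The volume terms reassemble into $\int_\Omega \mathbf{v}\cdot(\nabla\times\mathbf{g})\,dx$. The key point is the cancellation of the face integrals: every interior face is shared by two elements whose outward normals are opposite, while $\mathbf{g}$ is single-valued and $\mathbf{v}\times\mathbf{n}$ is continuous across the face by hypothesis, so the two contributions are equal and opposite and cancel; the faces lying on $\partial\Omega$ contribute nothing because $\mathbf{g}$ has compact support. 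This yields $\int_\Omega \mathbf{w}\cdot\mathbf{g}\,dx=\int_\Omega \mathbf{v}\cdot(\nabla\times\mathbf{g})\,dx$ for every $\mathbf{g}$, which is precisely the statement that $\nabla\times\mathbf{v}=\mathbf{w}\in (L^2(\Omega))^3$ in the distributional sense, hence $\mathbf{v}\in H(\text{curl};\Omega)$.

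Next I would treat $\nabla\times\mathbf{v}$. Having identified it with the piecewise-smooth field $\mathbf{w}$, I would invoke the standard characterization of $H^1$-conformity: a piecewise smooth function whose traces match across every interior face has distributional gradient equal to its piecewise gradient and therefore lies in $H^1$. Applying this componentwise, using the scalar Green's (divergence) formula tested against $\partial_i\psi$ with $\psi\in C_0^\infty(\Omega)$, where again the inter-element surface terms cancel because $\mathbf{w}=\nabla\times\mathbf{v}$ is continuous across interfaces, gives $\nabla\times\mathbf{v}\in (H^1(\Omega))^3$. Finally the homogeneous boundary conditions $\mathbf{v}\times\mathbf{n}=0$ and $\nabla\times\mathbf{v}=0$ on $\partial\Omega$ upgrade these memberships to $\mathbf{v}\in H_0(\text{curl};\Omega)$ and $\nabla\times\mathbf{v}\in (H_0^1(\Omega))^3$, so that $\mathbf{v}\in V$.

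I expect the main obstacle to be the careful bookkeeping in the face-cancellation step: one must check that the tangential trace $\mathbf{v}\times\mathbf{n}$, which is only a piecewise smooth ($L^2$) object on each face, is well defined and genuinely agrees from the two sides with the correct orientation of the normal, i.e. that the jump $[\![\mathbf{n}\times\mathbf{v}]\!]$ vanishes. This is exactly the place where continuity of the \emph{tangential} component of $\mathbf{v}$, rather than of the full field, is what is required, and it is what separates the $H(\text{curl})$-conformity of $\mathbf{v}$ from the stronger (and here unnecessary) full $H^1$-conformity.
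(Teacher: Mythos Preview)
The paper states this lemma without proof, so there is no argument to compare against. Your approach is the standard and correct one: element-wise integration by parts for the curl, cancellation of the interior face terms thanks to the continuity of the tangential trace $\mathbf{v}\times\mathbf{n}$, and then the analogous (componentwise) argument one order higher using the continuity of $\nabla\times\mathbf{v}$ to obtain $\nabla\times\mathbf{v}\in (H^1(\Omega))^3$.

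One point worth flagging, which you already implicitly noticed: the lemma as stated in the paper does not include the homogeneous boundary conditions $\mathbf{v}\times\mathbf{n}=0$ and $\nabla\times\mathbf{v}=0$ on $\partial\Omega$ among its hypotheses, yet membership in $V$ requires them. Your final sentence simply assumes these conditions, which is presumably what the authors intend (the lemma is there to characterize conforming subspaces of $V$), but strictly speaking the hypotheses of the lemma only yield $\mathbf{v}\in H(\mathrm{curl};\Omega)$ with $\nabla\times\mathbf{v}\in (H^1(\Omega))^3$. This is a minor imprecision in the paper's statement rather than a defect in your argument.
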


Using the following identity:
\begin{equation*}
(\nabla\times)^2\mathbf{u}=-\Delta\mathbf{u}+
\nabla(\nabla\cdot\mathbf{u})
\end{equation*}
and $\nabla\cdot \mathbf{u}=0$, the first equation in (\ref{four_curl_bvp}) can be rewritten in the following form:
\begin{equation}
-\alpha\nabla\times\Delta(\nabla\times \mathbf{u}) + \beta(\nabla\times)^2 \mathbf{u} +\gamma \mathbf{u}=\mathbf{f}.
\label{delta_curlcurl}
\end{equation}

Multiplying Equation (\ref{delta_curlcurl}) by the test function $\mathbf{v}$ and using integration by parts, we obtain the following variational formulation:
\begin{equation}
\text{Find}\; \mathbf{u}\in V\; \text{such that}\; a(\mathbf{u},\mathbf{v})=(\mathbf{f},\mathbf{v}), \;\forall\;\mathbf{v}\in V,\label{V_1_variational}
\end{equation}
where the bilinear form $a(\cdot,\cdot)$ defined on $V\times
V$ is given by
$$
a(\mathbf{u},\mathbf{v})=\alpha (\nabla(\nabla\times \mathbf{u}),\nabla(\nabla\times \mathbf{v}))+ \beta (\nabla\times
\mathbf{u},\nabla\times \mathbf{v}) + \gamma (\mathbf{u},\mathbf{v}).
$$

The well-posedness of the above variational problem follows from the
Lax-Milgram lemma.

The next lemma indicates that the weak solution satisfies the divergence-free constraint.

\begin{lemma}
Assume $\nabla\cdot \mathbf{f} = 0$, and let $\mathbf{u}$ be the solution of problem (\ref{V_1_variational}). Then $\nabla\cdot \mathbf{u} = 0$.
\end{lemma}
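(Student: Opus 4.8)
The plan is to test the variational identity \ref{V_1_variational} against gradient fields, which annihilate the two curl-dependent terms in $a(\cdot,\cdot)$ and thereby isolate the divergence of $\mathbf{u}$. First I would fix an arbitrary $\phi\in C_0^\infty(\Omega)$ and set $\mathbf{v}=\nabla\phi$. Before using it I must verify that $\nabla\phi\in V$: since $\nabla\times(\nabla\phi)=0$, we have $\nabla\times(\nabla\phi)\in H_0^1(\Omega)$ trivially, and because $\phi$ has compact support in $\Omega$ the tangential trace $(\nabla\phi)\times\mathbf{n}$ vanishes on $\partial\Omega$, so $\nabla\phi\in H_0(\mathrm{curl};\Omega)$. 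Hence $\nabla\phi$ is an admissible test function.

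Next I would substitute $\mathbf{v}=\nabla\phi$ into the bilinear form. Because $\nabla\times(\nabla\phi)=0$, both $\alpha(\nabla(\nabla\times\mathbf{u}),\nabla(\nabla\times\nabla\phi))$ and $\beta(\nabla\times\mathbf{u},\nabla\times\nabla\phi)$ vanish, leaving only the reaction term, so that $a(\mathbf{u},\nabla\phi)=\gamma(\mathbf{u},\nabla\phi)$. The variational equation then collapses to
\[
\gamma(\mathbf{u},\nabla\phi)=(\mathbf{f},\nabla\phi),\qquad\forall\,\phi\in C_0^\infty(\Omega).
\]

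Finally I would interpret both sides through the distributional divergence. Integration by parts gives $(\mathbf{u},\nabla\phi)=-\langle\nabla\cdot\mathbf{u},\phi\rangle$ and $(\mathbf{f},\nabla\phi)=-\langle\nabla\cdot\mathbf{f},\phi\rangle$, with no boundary contributions since $\phi$ is compactly supported. As $\nabla\cdot\mathbf{f}=0$ by hypothesis, the right-hand side vanishes, so $\gamma\langle\nabla\cdot\mathbf{u},\phi\rangle=0$ for every $\phi\in C_0^\infty(\Omega)$. Because $\gamma>0$, this forces $\nabla\cdot\mathbf{u}=0$ in the sense of distributions, which is precisely the claim.

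The step I expect to require the most care is confirming that the gradient fields genuinely lie in $V$, specifically that the tangential-trace condition defining $H_0(\mathrm{curl};\Omega)$ is satisfied. Restricting to compactly supported $\phi$ makes this transparent; if one instead preferred to use all $\phi\in H_0^1(\Omega)$, one would have to justify $(\nabla\phi)\times\mathbf{n}=0$ from $\phi|_{\partial\Omega}=0$, i.e.\ that a function vanishing on the boundary has vanishing tangential gradient there. Everything else is a routine application of the Lax--Milgram solution identity together with integration by parts.
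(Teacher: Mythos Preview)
Your proof is correct and follows essentially the same approach as the paper: test the variational identity against $\mathbf{v}=\nabla\phi$ with $\phi\in C_0^\infty(\Omega)$, use $\nabla\times\nabla\phi=0$ to kill the curl terms, and conclude $\nabla\cdot\mathbf{u}=\nabla\cdot\mathbf{f}=0$ by integration by parts. Your version is in fact more careful than the paper's, which omits the verification that $\nabla\phi\in V$ and silently drops the factor $\gamma$.
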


\begin{proof}
Choose test function $\mathbf{v}=\nabla \varphi$ where $\varphi\in C_0^\infty(\Omega)$, then
$$
(\mathbf{u},\nabla\varphi) = (\mathbf{f},\nabla\varphi),
$$
hence, $\nabla\cdot \mathbf{u} =\nabla\cdot \mathbf{f} = 0$.
\end{proof}

\section{A Nonconforming Finite Element}

In this section, we construct a nonconforming finite element to solve the fourth-order equation. One of the advantages for using a nonconforming element is that the number of degrees of freedom is small compared to that for conforming elements. The following construction is based on N\'{e}d\'{e}lec elements of the first family that consist of incomplete vector polynomials \cite{Nedelec:1980xe}. The advantage of using incomplete vector polynomial space is that it provides the same order of convergence in terms of energy norms as the one given by corresponding complete polynomial space. In the following, we define the degrees of freedom in a special way to ensure that the consistency error estimate holds.

\begin{definition}
The finite element triple
$(K,P_K,\Sigma_K)$ is defined by

\begin{itemize}
\item $K$ is a tetrahedron;

\item $\mathcal{P}_K=R_2(K)=\mathbf{P}_{1}\oplus \{\mathbf{p}\in
(\widetilde{P}_2)^3\;|\;\mathbf{p}\cdot \mathbf{x} = 0\}$, where $\widetilde{P}_2$ is the space of homogeneous multivariate polynomials of degree $2$;

\item $\Sigma_K$ is the set of degrees of freedom, see Figure \ref{element},
\begin{itemize}
\item edge degrees of freedom:
\begin{equation}
M_e(\mathbf{u})=\bigg\{\int_e \mathbf{u}\cdot \tau\;q\;d s\;|\;\forall
\;q\in P_1(e),\;\forall\;e\subset K\bigg\},
\label{edge d.o.f}
\end{equation}
where $\tau$ is the unit tangential vector along the edge $e$,

\item face degrees of freedom:
\begin{equation}
M_f(\mathbf{u})=\bigg\{\frac{1}{|f|^2}\int_f (\nabla\times \mathbf{u})\times \mathbf{n} \cdot
q \;d A\;|\;\forall\;q\in
(P_0(f))^2,\;\forall\;f\subset K\bigg\},
\label{face d.o.f}
\end{equation}
where $\mathbf{n}$ is the unit normal vector to the face $f$,

\end{itemize}
$\Sigma_K=M_e(\mathbf{u})\cup M_f(\mathbf{u})$.
\end{itemize}
\label{fem_triple_def}
\end{definition}

\begin{figure}[h]
\begin{center}
   \includegraphics[width=60mm]{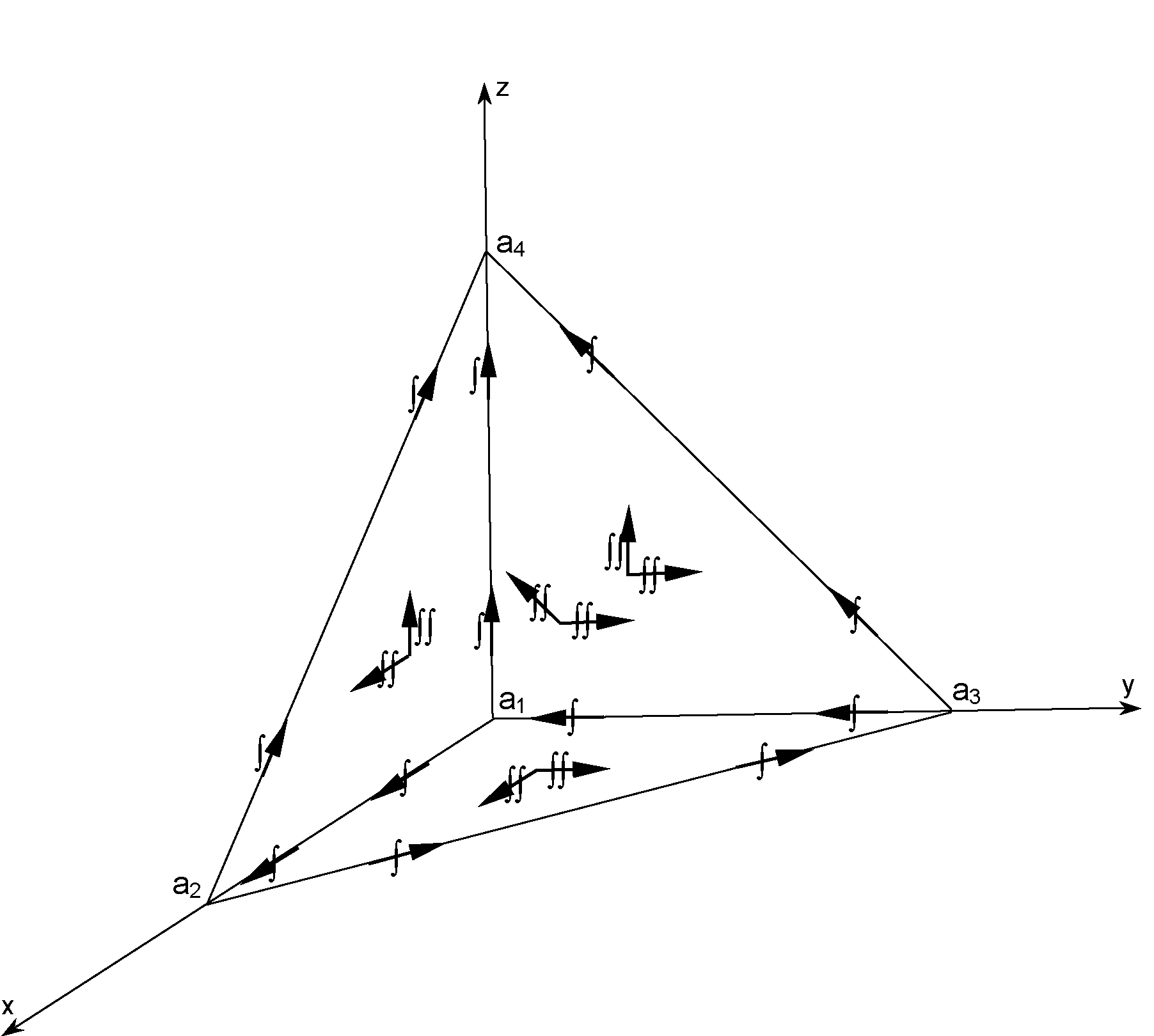}
   \caption{Degrees of freedom of the finite element}
   \label{element}
\end{center}
\end{figure}

In the above finite element triple, the space $\mathcal{P}_K$ is the same as the second order N\'{e}d\'{e}lec element of the first family for $H(\text{curl})$ problem. The difference is the definition of the second set of degrees of freedom. It is designed specifically to ensure consistency for the fourth-order problems. The total number of the degrees of freedom for this element is $20$, which is the same as the dimension of the polynomial space $R_2(K)$.

It should be pointed out that the scaling factor $1/|f|^2$ in the definition of the second set of degrees of freedom is associated with the construction of basis functions to be given later.

The next lemma given in \cite{Nedelec:1980xe} describes a relation between edge integrals and face integrals which will be useful in the error analysis.

\begin{lemma}
If $u\in R_2(K)$ is such that the edge degrees of freedom (\ref{edge d.o.f}) vanish, then
\begin{equation}
\int_f(\nabla\times \mathbf{u})\cdot \mathbf{n}\;dA=0,\;\forall\;\text{face}\;f\subset K.
\label{curl_u_face_integral}
\end{equation}
\label{relation-edge-face}
\end{lemma}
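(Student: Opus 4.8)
The plan is to invoke Stokes' theorem (the classical Kelvin--Stokes circulation theorem) to convert the surface integral of $\nabla\times\mathbf{u}$ over a face into a line integral of $\mathbf{u}$ around the boundary of that face, and then to observe that this boundary is composed precisely of edges of $K$, on which a special case of the edge degrees of freedom applies directly.

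First I would fix an arbitrary face $f\subset K$ together with its unit normal $\mathbf{n}$. Since $K$ is a tetrahedron, $f$ is a planar triangle whose boundary $\partial f$ consists of three edges of $K$, and $\mathbf{n}$ is constant on $f$. Because $\mathbf{u}\in R_2(K)$ is a polynomial, it is smooth on $\overline{K}$, so Stokes' theorem applies with no regularity obstruction and gives
$$\int_f(\nabla\times\mathbf{u})\cdot\mathbf{n}\,dA=\oint_{\partial f}\mathbf{u}\cdot d\mathbf{l}=\sum_{e\subset\partial f}\pm\int_e\mathbf{u}\cdot\tau\,ds,$$
where $\tau$ is the unit tangent along each edge $e$ and the sign accounts for whether the induced orientation of $\partial f$ (by the right-hand rule relative to $\mathbf{n}$) agrees with the fixed choice of $\tau$ in the definition of the degrees of freedom.

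The key step is then to recognize the edge integrals as degrees of freedom. Taking the test function $q\equiv 1\in P_1(e)$ in the definition (\ref{edge d.o.f}) of $M_e(\mathbf{u})$, the hypothesis that all edge degrees of freedom vanish yields $\int_e\mathbf{u}\cdot\tau\,ds=0$ for every edge $e\subset K$, in particular for the three edges forming $\partial f$. Hence each term in the sum above vanishes individually, the orientation signs become irrelevant, and the circulation is zero, so that $\int_f(\nabla\times\mathbf{u})\cdot\mathbf{n}\,dA=0$. Since $f$ was an arbitrary face of $K$, the claim follows.

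I do not anticipate a genuine obstacle here: the only point requiring a little care is the bookkeeping of edge orientations in passing from the circulation to the sum of tangential edge integrals, and this is rendered harmless by the fact that each individual edge integral already vanishes. The essential content is simply that the constant function belongs to $P_1(e)$, so that the mean tangential value of $\mathbf{u}$ along each edge is among the quantities controlled by the edge degrees of freedom.
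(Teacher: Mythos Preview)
Your proposal is correct and follows essentially the same approach as the paper: both apply Stokes' theorem on a face to convert $\int_f(\nabla\times\mathbf{u})\cdot\mathbf{n}\,dA$ into a boundary line integral, then use the constant test function $q\equiv 1\in P_1(e)$ in the edge degrees of freedom to conclude that each edge contribution vanishes. The paper states Stokes' theorem in a surface--curl integration--by--parts form with a general scalar $q$ before specializing to $q$ constant, while you invoke the classical circulation form directly; this is only a cosmetic difference.
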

\begin{proof}
Given $\mathbf{u}\in R_2(K)$ satisfies
$$
\int_e \mathbf{u}\cdot\tau \;q\;ds=0,\;\forall \;\text{edge}\; e\subset K.
$$
By Stokes' Theorem,
$$
\int_f (\nabla_f\times \mathbf{u}_T)\cdot q \;dA- \int_f (\vec{\nabla}_f\times q)\cdot \mathbf{u}_T\;dA = \int_{\partial f} \mathbf{u}\cdot \tau\; q\;ds,
$$
where $\mathbf{u}_T$ is the tangential part of $\mathbf{u}$, and $\vec{\nabla}_f\times$ and $\nabla_f\times$ are surface vector curl and surface scalar curl, respectively. Let $q$ be a constant. Notice that
$$
\nabla_f \times \mathbf{u}_T = (\nabla\times \mathbf{u})\cdot \mathbf{n},
$$
we conclude,
$$
\int_f(\nabla\times \mathbf{u})\cdot \mathbf{n}\;dA=0.
$$
\end{proof}

As a direct consequence of Lemma \ref{relation-edge-face}, if both the edge degrees of freedom (\ref{edge d.o.f}) and face degrees of freedom (\ref{face d.o.f}) vanish, then
$$
\int_f (\nabla\times \mathbf{u}) \;dA= 0.
$$

The polynomial space $R_2(K)$ has the following property \cite{Girault:1986fk}.

\begin{lemma}
If $\mathbf{u}\in R_2(K)$ satisfies $\nabla\times \mathbf{u} = 0$, then
$$
\mathbf{u}=\nabla p,\;\text{with}\;p\in P_2.
$$
\label{R2lemma}
\end{lemma}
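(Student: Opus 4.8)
The plan is to produce an explicit scalar potential via the Poincar\'e (homotopy) formula and then to read off its degree directly from the defining constraint $\mathbf{p}\cdot\mathbf{x}=0$ of $R_2(K)$.

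First I would observe that $\nabla\times\mathbf{u}$ is a polynomial that vanishes on the tetrahedron $K$; since $K$ has nonempty interior, it must vanish identically, so $\mathbf{u}$ is curl-free on all of $\mathbb{R}^3$. As $\mathbb{R}^3$ is star-shaped with respect to the origin, the homotopy formula
$$p(\mathbf{x}) = \int_0^1 \mathbf{u}(t\mathbf{x})\cdot\mathbf{x}\,dt$$
defines a polynomial $p$ with $\nabla p=\mathbf{u}$. A priori the degree of $\mathbf{u}$ is two, so this $p$ could be a cubic, and the whole content of the lemma is that it is in fact at most quadratic.

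The key step is to see that the constraint built into $R_2(K)$ controls the degree of $p$. I would decompose $\mathbf{u}=\mathbf{a}+\mathbf{r}$ according to the definition, with $\mathbf{a}\in\mathbf{P}_1$ an affine field and $\mathbf{r}$ homogeneous of degree two satisfying $\mathbf{r}\cdot\mathbf{x}=0$. Since $\mathbf{r}$ is homogeneous of degree two, $\mathbf{r}(t\mathbf{x})\cdot\mathbf{x}=t^2\,\mathbf{r}(\mathbf{x})\cdot\mathbf{x}=0$, so $\mathbf{r}$ — the only part that could contribute a cubic term to $p$ — drops out of the integral entirely. What remains is the contribution of $\mathbf{a}=\mathbf{a}_0+A\mathbf{x}$, giving $p(\mathbf{x})=\mathbf{a}_0\cdot\mathbf{x}+\tfrac12\,\mathbf{x}^{\top}A\mathbf{x}\in P_2$, whence $\mathbf{u}=\nabla p$ with $p\in P_2$.

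The only point that requires care is precisely this degree reduction: a generic curl-free field of degree two has a cubic potential, and it is the constraint $\mathbf{r}\cdot\mathbf{x}=0$ — equivalently, by Euler's identity for homogeneous functions, the statement that the would-be homogeneous cubic part of $p$ is annihilated — that forces the potential down into $P_2$. This is exactly the reason the homogeneous quadratic block of $R_2(K)$ is defined with that constraint, and an alternative route would be to apply the Poincar\'e lemma abstractly to get $p\in P_3$, split off the homogeneous cubic $p_3$, match homogeneous degrees to obtain $\nabla p_3=\mathbf{r}$, and then conclude $3p_3=\mathbf{x}\cdot\nabla p_3=\mathbf{x}\cdot\mathbf{r}=0$, so $p_3\equiv 0$.
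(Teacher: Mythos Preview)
Your argument is correct. The paper does not prove this lemma at all; it simply states the result and cites Girault--Raviart \cite{Girault:1986fk}. So there is no ``paper's proof'' to compare against, and what you have written is a complete, self-contained justification.

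One small remark on presentation: when you write $p(\mathbf{x})=\mathbf{a}_0\cdot\mathbf{x}+\tfrac12\mathbf{x}^{\top}A\mathbf{x}$ and assert $\nabla p=\mathbf{u}$, this is true, but a reader checking by hand will find $\nabla p=\mathbf{a}_0+\tfrac12(A+A^{\top})\mathbf{x}$ rather than $\mathbf{a}_0+A\mathbf{x}+\mathbf{r}$. The reconciliation is that the curl-free hypothesis forces both $A=A^{\top}$ (from the constant part of $\nabla\times\mathbf{u}$) and $\mathbf{r}=0$ (by your Euler-identity argument), so everything is consistent. You do not need to spell this out, since the Poincar\'e formula already guarantees $\nabla p=\mathbf{u}$ abstractly and your only task is to bound the degree of $p$; but you may wish to avoid the explicit formula for $p$ to keep the reader from stumbling. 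Your alternative route via $3p_3=\mathbf{x}\cdot\nabla p_3=\mathbf{x}\cdot\mathbf{r}=0$ is cleaner in this respect and is in fact the standard textbook argument.
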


We recall that the finite element $(K,\mathcal{P}_K,\Sigma_K)$ is said to be unisolvent if a function in $\mathcal{P}_K$ can be uniquely determined by specifying values for degrees of freedom in $\Sigma_K$.

\begin{proposition}
The finite element defined by Definition \ref{fem_triple_def} is unisolvent.
\end{proposition}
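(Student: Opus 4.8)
The plan is to use the standard dimension-count reduction. Since $\dim\mathcal{P}_K=\dim R_2(K)=20$ and $\Sigma_K$ also consists of $20$ functionals (the $6$ edges each contribute $\dim P_1(e)=2$, giving $12$ edge functionals, and the $4$ faces each contribute $\dim (P_0(f))^2=2$, giving $8$ face functionals), the map sending $\mathbf{u}\in R_2(K)$ to its list of degrees of freedom is a linear map between spaces of equal dimension. Hence unisolvence is equivalent to injectivity, and it suffices to show that if every functional in $\Sigma_K$ annihilates $\mathbf{u}\in R_2(K)$, then $\mathbf{u}=0$. So assume that all edge and face degrees of freedom of $\mathbf{u}$ vanish.

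First I would show $\nabla\times\mathbf{u}=0$. By the structure $R_2(K)=\mathbf{P}_1\oplus\{\mathbf{p}\in(\widetilde{P}_2)^3\mid \mathbf{p}\cdot\mathbf{x}=0\}$, each component of $\nabla\times\mathbf{u}$ is affine: the curl of the $\mathbf{P}_1$ part is constant, and the curl of the homogeneous quadratic part is homogeneous linear. Since the edge degrees of freedom vanish, Lemma~\ref{relation-edge-face} gives $\int_f(\nabla\times\mathbf{u})\cdot\mathbf{n}\,dA=0$ on every face, i.e. the mean normal component of $\nabla\times\mathbf{u}$ over each face is zero. Since the face degrees of freedom vanish and $(\nabla\times\mathbf{u})\times\mathbf{n}$ ranges over the tangent plane of $f$, the mean tangential component of $\nabla\times\mathbf{u}$ over each face is also zero. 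Combining the two, $\int_f\nabla\times\mathbf{u}\,dA=0$ for each of the four faces. Because $\nabla\times\mathbf{u}$ is affine, its average over $f$ equals its value at the face centroid $c_f$, so $\nabla\times\mathbf{u}(c_f)=0$ for the four faces. The four face centroids are the images of the four affinely independent vertices under an invertible affine map, hence are themselves affinely independent; since an affine scalar function vanishing at four affinely independent points of $\mathbb{R}^3$ is identically zero, applying this componentwise yields $\nabla\times\mathbf{u}\equiv 0$.

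Next I would conclude $\mathbf{u}=0$. By Lemma~\ref{R2lemma}, $\nabla\times\mathbf{u}=0$ forces $\mathbf{u}=\nabla p$ for some $p\in P_2$. On an edge $e$ with unit tangent $\tau$ and arclength $s$, one has $\mathbf{u}\cdot\tau=\partial p/\partial s$, an affine function of $s$ lying in $P_1(e)$. The vanishing edge degrees of freedom say $\int_e(\partial p/\partial s)\,q\,ds=0$ for all $q\in P_1(e)$, and taking $q=\partial p/\partial s$ gives $\partial p/\partial s\equiv 0$ on $e$; thus $p$ is constant along every edge. A direct check on a reference tetrahedron (restricting $p$ to the three coordinate axes and the three slanted edges) shows that a quadratic constant on all six edges must be a global constant, whence $\mathbf{u}=\nabla p=0$.

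The main obstacle is the middle step, passing from the vanishing face data to $\nabla\times\mathbf{u}=0$. The observations that make it work are that $\nabla\times\mathbf{u}$ is affine, so each face integral reduces to a point value at the face centroid, and that the four face centroids are affinely independent. It is essential to use both sets of conditions here: Lemma~\ref{relation-edge-face} supplies the normal trace while the face degrees of freedom supply the tangential trace, and only together do they pin down the full face average (note that the scaling factor $1/|f|^2$ is irrelevant for unisolvence, as it only rescales nonzero functionals). The remaining steps are routine linear algebra and a short reference-element computation.
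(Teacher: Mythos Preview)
Your proof is correct. The overall architecture matches the paper's: reduce to injectivity by dimension count, combine Lemma~\ref{relation-edge-face} with the face degrees of freedom to obtain $\int_f \nabla\times\mathbf{u}\,dA=0$ on every face, conclude $\nabla\times\mathbf{u}=0$, then invoke Lemma~\ref{R2lemma} and the edge moments to finish.

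The only genuine difference is how you pass from the vanishing face integrals to $\nabla\times\mathbf{u}=0$. The paper argues analytically: since $\nabla(\nabla\times\mathbf{u})$ is a constant matrix, the divergence theorem gives
\[
\nabla(\nabla\times\mathbf{u})=\frac{1}{|K|}\int_{\partial K}(\nabla\times\mathbf{u})\,\mathbf{n}^T\,dA=0,
\]
so $\nabla\times\mathbf{u}$ is constant, and then the normal conditions $\int_f(\nabla\times\mathbf{u})\cdot\mathbf{n}\,dA=0$ against four independent normals force it to vanish. You argue geometrically: the face integral of an affine function equals its value at the face centroid, and the four face centroids are affinely independent (being the image of the vertices under the affine map $a\mapsto (a_1+a_2+a_3+a_4-a)/3$), so an affine vector vanishing at all four is identically zero. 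Your route is a shade more elementary since it avoids integration by parts and uses only the exactness of the one-point quadrature for linears on a simplex; the paper's route has the mild advantage that it separates the two uses of the data (first kill the gradient, then kill the constant), which can be convenient if one later perturbs the face functionals. Either way the argument is complete.
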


\begin{proof}
It is sufficient to prove that, given $\mathbf{u}\in R_2(K)$,
$$
M_e(\mathbf{u}) = M_f(\mathbf{u}) = 0,\;\forall e\subset K, f\subset K\Rightarrow \mathbf{u} = 0.
$$
Obviously, $
\nabla (\nabla\times \mathbf{u})$ is a constant vector.
Then using (\ref{curl_u_face_integral}) and integration by parts, we obtain
$$
\nabla(\nabla\times \mathbf{u}) = \frac{1}{|K|}\int_K \nabla(\nabla\times \mathbf{u}) dx  = \frac{1}{|K|}\int_{\partial K} (\nabla\times \mathbf{u})\mathbf{n}^T dA = 0.
$$
This implies that
$$
\nabla(\nabla\times \mathbf{u}) = 0 \Rightarrow \nabla\times \mathbf{u} = \text{const}.
$$

Using again (\ref{curl_u_face_integral}), we have
\begin{equation*}
\nabla\times \mathbf{u} = 0.
\end{equation*}
By Lemma \ref{R2lemma}, we have
$$
\mathbf{u}=\nabla p,\;\text{with}\;p\in P_2(K).
$$

Since $M_e(\mathbf{u}) = 0$, we have
$$
\int_e \frac{\partial p}{\partial \tau}\; q\; ds = 0, \forall q\in P_1(e).
$$
This implies $\partial p/\partial \tau = 0$ on each edge $e$. Hence, $p$ is constant and $\mathbf{u}=0$.
\end{proof}

In the following, we construct the basis functions. The explicit form of these basis functions not only is useful for implementation, but also instrumental for the interpolation error estimate.

\subsection{Basis functions}
 The main idea of the construction is to consider linear combinations of basis functions of a related N\'{e}d\'{e}lec element. Let $K$ be an arbitrary tetrahedron with four vertices $a_i$, $a_j$, $a_k$ and $a_l$, see Figure \ref{element2}. The corresponding barycentric coordinates are given by
$\lambda_i$, $\lambda_j$, $\lambda_k$, and $\lambda_l$, respectively.

\begin{figure}[h]
\begin{center}
   \includegraphics[width=60mm]{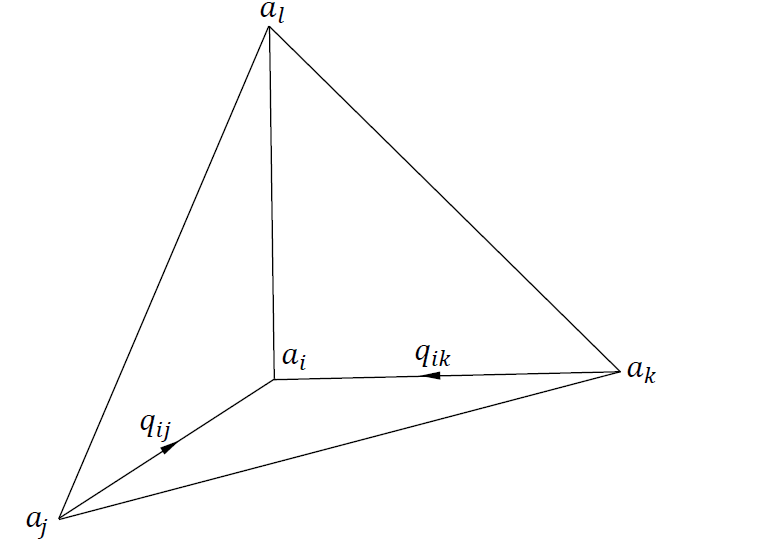}
   \caption{A tetrahedron with vertices $a_i$, $a_j$, $a_k$, $a_l$. $\mathbf{q}_{ij}$ and $\mathbf{q}_{ik}$} are two tangential vectors on the face $F_l$.
   \label{element2}
\end{center}
\end{figure}

On each of the four faces, say face $l$ (with vertices $a_i, a_j, a_k$), we choose the following two tangential direction vectors:
$$
\mathbf{q}_{ij} = \overrightarrow{a_j a_i} = 6|K|(\nabla\lambda_l\times \nabla\lambda_k),
$$
$$
\mathbf{q}_{ik} = \overrightarrow{a_k a_i} =
6|K|(\nabla\lambda_j\times \nabla\lambda_l).
$$

The edge degrees of freedom on edge $e_{ij}$ (with vertices $a_i$ and $a_j$) are defined explicitly by:
$$
M_{ij}^{(1)}(\mathbf{u}) = \int_{e_{ij}} \mathbf{u}\cdot\tau \;ds,
$$
$$M_{ij}^{(2)}(\mathbf{u}) = \int_{e_{ij}} \mathbf{u}\cdot
\tau\left(3-\frac{6}{|e_{ij}|}s\right)\;ds,
$$
where $\tau$ is the unit direction vector of edge $e_{ij}$, $s$ is an arc length parameter. The face degrees of freedom are defined as:
$$
M_{lij}(\mathbf{u}) = \frac{1}{|f_l|^2} \int_{f_{l}}(\nabla \times \mathbf{u})\times \mathbf{n}_l\cdot \mathbf{q}_{ij}\;dA,
$$
$$
M_{lik}(\mathbf{u}) = \frac{1}{|f_l|^2} \int_{f_{l}}(\nabla \times \mathbf{u})\times \mathbf{n}_l\cdot \mathbf{q}_{ik}\;dA,
$$
where $\mathbf{n}_l$ is the unit outward normal vector of the face $f_l$.

We recall that the basis functions of the second order N\'{e}d\'{e}lec element of the first family in barycentric coordinates are (see, e.g., \cite{Gopalakrishnan:2005eu}, \cite{Sun:2008yq}, \cite{Webb:1999sh}):

(1) Two basis functions on each edge $e_{ij}$:
$$
\mathbf{L}_{ij} = \lambda_i\nabla\lambda_j-\lambda_j\nabla\lambda_i,
$$
$$
\mathbf{L}_{ji} = \lambda_i\nabla\lambda_j+\lambda_j\nabla\lambda_i.
$$

(2) Two basis functions on each face $f_l$:
$$
\mathbf{L}_{ijk} = \lambda_i(\lambda_j\nabla\lambda_k - \lambda_k\nabla\lambda_j),
$$
$$
\mathbf{L}_{jik} = \lambda_j(\lambda_i\nabla\lambda_k - \lambda_k\nabla\lambda_i).
$$

In the following, we list a few useful facts about the geometry of a tetrahedron.

(1) The unit outward normal vector of face $f_l$ is given by
$$
-\frac{\nabla \lambda_l}{\|\nabla\lambda_l\|}.
$$

(2) The two tangential vectors of face $f_l$ are given by $\mathbf{q}_{ij}$ and $\mathbf{q}_{ik}$.

(3)  Let $h_l$ be the height of the tetrahedron corresponding to the face $f_l$, then
 $$
\nabla\lambda_l=\frac{1}{6|K|}\mathbf{q}_{ik}\times \mathbf{q}_{jk},
$$
$$
|\nabla\lambda_l| = \frac{1}{h_l}.
$$

(4)  Let $|K|$ be the volume of the tetrahedron $K$, then
 $$
6|K|=|\mathbf{q}_{il}\cdot(\mathbf{q}_{jl}\times \mathbf{q}_{kl})| = \frac{-1}{(\nabla\lambda_i\times \nabla\lambda_j)\cdot \nabla\lambda_k }.
$$

Next, we construct basis functions in barycentric coordinates. They provide a set of dual basis functions with respect to the prescribed degrees of freedom.

{\bf Step 1}. Construct eight basis functions $\{\phi_{lij}\}$ corresponding to the face degrees of freedom such that
\begin{equation}
M_{mn}^{(t)}(\phi_{lij})=0,\label{face_function_edge}
\end{equation}
and
\begin{equation}
M_{mnp}(\phi_{lij})=\delta_{ml}\delta_{ni}
\delta_{pj}.\label{face_function_face}
\end{equation}
We use the basis functions of the second order N\'{e}d\'{e}lec element as building blocks as they automatically satisfy the first condition (\ref{face_function_edge}).  Using the facts listed above, we find that the basis functions corresponding to the facial degrees of freedom on face $f_l$ are given by the following:
$$
\phi_{lij} = 3|K|(\mathbf{L}_{lij}-\mathbf{L}_{ljk}),
$$
$$
\phi_{lik} = 3|K|(\mathbf{L}_{lik}-\mathbf{L}_{ljk}).
$$

By direct calculation, we have
\begin{eqnarray*}
&&\int_{f_l}(\nabla\times \mathbf{L}_{lij})\times \nabla\lambda_l\cdot(\nabla\lambda_l\times\nabla\lambda_k)\;dA\\
& = & \int_{f_l}\left[2\lambda_l(\nabla\lambda_i\times\nabla\lambda_j)
+\lambda_i(\nabla\lambda_l\times\nabla\lambda_j)-\lambda_j
(\nabla\lambda_l\times\nabla\lambda_i)\right]\\
&& \cdot\left[\nabla\lambda_l(\nabla_l\cdot\nabla_l)
-\nabla\lambda_k(\nabla\lambda_l\cdot\nabla\lambda_k)
\right]\;dA\\
& = & -\left(\int_{f_l}\lambda_i \;dA\right)[(\nabla\lambda_l\times\nabla\lambda_j)
\cdot \nabla\lambda_k](\nabla\lambda_l\cdot\nabla\lambda_l)\\
&& + \left(\int_{f_l}\lambda_j \;dA\right)[(\nabla\lambda_l\times\nabla\lambda_i)
\cdot \nabla\lambda_k](\nabla\lambda_l\cdot\nabla\lambda_l)\\
& = & -\frac{2}{3}|f_l|\frac{1}{6|K|h_l^2},
\end{eqnarray*}
and
\begin{eqnarray*}
\int_{f_l}(\nabla\times \mathbf{L}_{ljk})\times \nabla\lambda_l\cdot(\nabla\lambda_l\times\nabla\lambda_k)\;dA = \frac{1}{3}|f_l|\frac{1}{6|K|h_l^2}.
\end{eqnarray*}
Hence,
\begin{eqnarray*}
M_{lij}(\phi_{lij})&=&\frac{1}{|f_l|^2}\int_{f_l}
(\nabla\times \phi_{lij})\times n_l\cdot \mathbf{q}_{ij}\;dA\\
&=& \frac{1}{|f_l|^2}\int_{f_l}\nabla\times\left[
3|K|(\mathbf{L}_{lij}-\mathbf{L}_{ljk})\right]
\times -\frac{\nabla\lambda_l}{\|\nabla\lambda_l\|}
\cdot(6|K|\nabla\lambda_l\times
\nabla\lambda_k)dA
\\
&=& -\frac{18|K|^2h_l}{|f_l|^2}\int_{f_l}\nabla\times
(\mathbf{L}_{lij}
-\mathbf{L}_{ljk})\times\nabla\lambda_l\cdot (\nabla\lambda_l
\times\nabla\lambda_k)\;dA\\
&=& -\frac{18|K|^2h_l}{|f_l|^2}
\left(-\frac{2}{3}|f_l|\frac{1}{6|K|h_l^2}
-\frac{1}{3}|f_l|\frac{1}{6|K|h_l^2}\right) = 1.
\end{eqnarray*}

Similarly, $M_{lik}(\phi_{lij}) = 0$, and $M_{f_{l'}}(\phi_{lij})=0$ where $l'\neq l$.

{\bf Step 2}. Construct twelve basis functions $\{\psi_{ij}^{(t)}|1\leq i<j\leq 4, t=1,2\}$ corresponding to the edge degrees of freedom such that
\begin{equation}
M_{mn}^{(t')}(\psi_{ij}^{(t)}) = \delta_{t't}\delta_{mn,ij},\label{edge_function_edge}
\end{equation}
\begin{equation}
M_{mnp}(\psi_{ij}) = 0.\label{edge_function_face}
\end{equation}

Here, we use the edge basis functions of the second order N\'{e}d\'{e}lec element as building blocks since they satisfy condition (\ref{edge_function_edge}). Since $\nabla\times \mathbf{L}_{ji}=0$, $\mathbf{L}_{ji}$ automatically satisfy condition (\ref{edge_function_face}).

For functions $\mathbf{L}_{ij}$, we need to subtract from them a linear combination of face basis functions so that (\ref{edge_function_edge}) and (\ref{edge_function_face}) hold. This can be done because by construction, our face basis functions have no edge moments. This strategy for constructing basis functions can be found in \cite{Gopalakrishnan:2005eu,Sun:2008yq}.

Finally, we can write the basis functions of the new element as the following:

(1) Two basis functions on each face $l$ ($1\leq l\leq 4$):
$$
\phi_{lij} = 3|K|(\mathbf{L}_{lij}-\mathbf{L}_{ljk}),
$$
$$
\phi_{lik} = 3|K|(\mathbf{L}_{lik}-\mathbf{L}_{ljk}),
$$
where $\mathbf{L}_{lij}= \lambda_i(\lambda_j\nabla\lambda_k - \lambda_k\nabla\lambda_j)$.

(2) Two basis functions on each edge $e_{ij}$ ($1\leq i<j\leq 4$):
$$
\psi_{ij}^{(1)}=\mathbf{L}_{ji},
$$
$$
\psi_{ij}^{(2)}=\mathbf{L}_{ij}-\sum M_{mnp} (\mathbf{L}_{ij})\phi_{mnp}.
$$

\subsection{Convergence analysis}

Let $\mathcal{T}_h=\{K_i\}_{i=1}^{N_h}$ be a triangulation
of the domain $\Omega$. On this triangulation we introduce the finite element space $V_h$ and define the discrete norm $\|\cdot\|_h$ by
$$
\| \mathbf{v}\|_h=\left[\sum_{K\in\mathcal{T}_h}
\left(\|\mathbf{v}\|_{0,K}^2+\|\nabla\times \mathbf{v}\|_{0,K}^2+\|\nabla(\nabla\times \mathbf{v})\|_{0,K}^2\right)\right]^{1/2}.
$$

Consider the following discrete bilinear form:
\begin{equation*}
\begin{split}
a_h(\mathbf{u}_h,\mathbf{v}_h)
=\sum_{K\in\mathcal{T}_h} \alpha(\nabla(\nabla\times \mathbf{u}_h),\nabla(\nabla\times
\mathbf{v}_h))_{L^2(K)}&+\beta (\nabla\times \mathbf{u}_h,\nabla\times
\mathbf{v}_h)_{L^2(K)}\\
 &+ \gamma (\mathbf{u}_h,\mathbf{v}_h)_{L^2(K)}.
\end{split}
\end{equation*}
It is straightforward to verify that the bilinear
form $a_h$ satisfies
$$
a_h(\mathbf{v},\mathbf{v})\gtrsim  \|\mathbf{v}\|_h^2,\;\forall\; \mathbf{v}\in V_h,
$$
$$
|a_h(\mathbf{u},\mathbf{v})|\lesssim
\|\mathbf{u}\|_h\|\mathbf{v}\|_h,\;
\forall\;\mathbf{u}\in V+V_h, \mathbf{v}\in V_h.
$$

The nonconforming finite element discretization of problem
(3.10) is:

Find $\mathbf{u}_h\in V_h$, such that for all $\mathbf{v}_h\in V_h$,
\begin{equation}
a_h(\mathbf{u}_h,\mathbf{v}_h)=(\mathbf{f},\mathbf{v}_h).\label{discrete var prob1}
\end{equation}
The convergence of the above finite element approximation can be analyzed through the following second Strang lemma \cite{Ciarlet:1978qf}.
\begin{lemma}
$$
\|\mathbf{u}-\mathbf{u}_h\|_h\lesssim \inf_{\mathbf{v}_h\in V_h}\|\mathbf{u}-\mathbf{v}_h\|_h +
\sup_{\mathbf{w}_h\in V_h}\frac{|a_h(\mathbf{u},\mathbf{w}_h)-
(\mathbf{f},\mathbf{w}_h)|}{\|\mathbf{w}_h\|_h},
$$
where the first term on the right-hand side is called the interpolation error and the second term is called the consistency error.
\end{lemma}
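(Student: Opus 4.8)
The plan is to follow the standard abstract argument for nonconforming methods, relying only on the two structural estimates stated just above the lemma: the discrete coercivity $a_h(\mathbf{v},\mathbf{v})\gtrsim\|\mathbf{v}\|_h^2$ for $\mathbf{v}\in V_h$ and the boundedness $|a_h(\mathbf{u},\mathbf{v})|\lesssim\|\mathbf{u}\|_h\|\mathbf{v}\|_h$ for $\mathbf{u}\in V+V_h$, $\mathbf{v}\in V_h$, together with the discrete equation (\ref{discrete var prob1}). The whole point is that, because the method is nonconforming, the exact solution $\mathbf{u}$ does not satisfy the discrete problem, and the residual $a_h(\mathbf{u},\cdot)-(\mathbf{f},\cdot)$ is precisely what must be isolated as the consistency error.

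First I would fix an arbitrary $\mathbf{v}_h\in V_h$ and set $\mathbf{w}_h=\mathbf{u}_h-\mathbf{v}_h\in V_h$; the goal is to estimate $\|\mathbf{w}_h\|_h$, after which the triangle inequality will finish the job. Applying coercivity and then the discrete equation gives
$$
\|\mathbf{w}_h\|_h^2\lesssim a_h(\mathbf{w}_h,\mathbf{w}_h)=a_h(\mathbf{u}_h,\mathbf{w}_h)-a_h(\mathbf{v}_h,\mathbf{w}_h)=(\mathbf{f},\mathbf{w}_h)-a_h(\mathbf{v}_h,\mathbf{w}_h).
$$
The key algebraic step is to add and subtract $a_h(\mathbf{u},\mathbf{w}_h)$, which splits the right-hand side into the residual $(\mathbf{f},\mathbf{w}_h)-a_h(\mathbf{u},\mathbf{w}_h)$ plus the approximation term $a_h(\mathbf{u}-\mathbf{v}_h,\mathbf{w}_h)$. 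I would then bound the residual by $\|\mathbf{w}_h\|_h$ times the supremum appearing in the statement, and the approximation term by the boundedness estimate as $\lesssim\|\mathbf{u}-\mathbf{v}_h\|_h\,\|\mathbf{w}_h\|_h$. Dividing through by $\|\mathbf{w}_h\|_h$ yields a bound on $\|\mathbf{w}_h\|_h$ by the two desired quantities.

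Finally, the triangle inequality $\|\mathbf{u}-\mathbf{u}_h\|_h\leq\|\mathbf{u}-\mathbf{v}_h\|_h+\|\mathbf{w}_h\|_h$ combines the approximation contribution with the estimate for $\|\mathbf{w}_h\|_h$, and taking the infimum over $\mathbf{v}_h\in V_h$ (legitimate since $\mathbf{v}_h$ was arbitrary and the consistency supremum is independent of $\mathbf{v}_h$) produces the stated inequality. There is no genuine analytic obstacle here, as this is a textbook consequence of coercivity and continuity; the only point requiring care is the add-and-subtract of $a_h(\mathbf{u},\mathbf{w}_h)$, since it is exactly this manipulation that exposes the nonconformity residual and gives the consistency term, and one must verify that the boundedness estimate is invoked with its first argument $\mathbf{u}-\mathbf{v}_h\in V+V_h$ (not merely in $V_h$), which is why that estimate was stated for the enlarged space.
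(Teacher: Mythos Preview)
Your argument is correct and is precisely the standard proof of the second Strang lemma. The paper does not give its own proof of this statement; it simply cites it as a known result from \cite{Ciarlet:1978qf}, and your write-up is essentially the textbook argument one finds there.
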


In order to estimate the consistency error we first define an average operator $P_{f}$ on a face $f$ by
$$
P_{f} \mathbf{w} = \frac{1}{|f|}\int_{f} \mathbf{w} \;dA.
$$
Since for any $\mathbf{v}_h\in V_h$, the quantity $\int_f \nabla\times \mathbf{v}_h\;dA$ is continuous, we know that $P_f$ is well-defined for $\nabla\times \mathbf{v}_h$.
The following two lemmas are standard results.
\begin{lemma} Given any face $f\subset K$ and $\mathbf{w}\in (H^1(K))^3$,
$$
\int_{f} |\mathbf{w}-P_{f}\mathbf{w}|^2 \;dA \lesssim h_K|\mathbf{w}|^2_{1,K}.
$$\label{face_estimate_lemma}
\end{lemma}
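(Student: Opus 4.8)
The plan is to reduce the estimate to a fixed reference configuration by an affine change of variables and then invoke a Bramble--Hilbert argument there. Since $P_f$ and the gradient both act componentwise on $\mathbf{w}=(w_1,w_2,w_3)$, it suffices to prove the inequality for a scalar function $w\in H^1(K)$ and then sum over the three components. First I would fix the reference tetrahedron $\hat K$ and let $F(\hat x)=B\hat x+b$ be the affine map with $F(\hat K)=K$ carrying a fixed reference face $\hat f$ onto the given face $f$. Writing $\hat w=w\circ F$, a short computation shows that the face average commutes with this pullback, i.e. $P_f w$ and $P_{\hat f}\hat w$ are the same constant, so that $w-P_f w$ pulls back exactly to $\hat w-P_{\hat f}\hat w$.

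The core estimate is the corresponding inequality on the reference element, $\int_{\hat f}|\hat w-P_{\hat f}\hat w|^2\,d\hat A\lesssim |\hat w|_{1,\hat K}^2$, with a constant depending only on $\hat K$. To obtain it I would consider the linear operator $T\colon H^1(\hat K)\to L^2(\hat f)$ defined by $T\hat w=\hat w|_{\hat f}-P_{\hat f}\hat w$. By the trace theorem $T$ is bounded, and by construction $T$ annihilates the constants. The Deny--Lions / Bramble--Hilbert lemma then gives $\|T\hat w\|_{0,\hat f}\lesssim |\hat w|_{1,\hat K}$, since the seminorm $|\cdot|_{1,\hat K}$ is a norm on the quotient $H^1(\hat K)/\mathbb{R}$ equivalent to the full $H^1$ norm.

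It remains to scale back and track the geometric factors. The surface measure scales by the constant ratio $|f|/|\hat f|$, so $\int_f|w-P_f w|^2\,dA=(|f|/|\hat f|)\int_{\hat f}|\hat w-P_{\hat f}\hat w|^2\,d\hat A$, while the standard estimate $|\hat w|_{1,\hat K}^2\lesssim \|B\|^2|\det B|^{-1}|w|_{1,K}^2$ controls the reference seminorm. Combining these and using $|\det B|=|K|/|\hat K|$ leaves the constant $(|f|/|\hat f|)\,\|B\|^2\,(|\hat K|/|K|)$. For a shape-regular mesh the bounds $\|B\|\lesssim h_K$, $|f|\lesssim h_K^2$ and $|K|\gtrsim h_K^3$ reduce this constant to $O(h_K)$, which is exactly the claimed factor; summing over the three components of $\mathbf{w}$ then finishes the argument.

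The only genuinely nontrivial ingredient is the reference-element estimate, where the Bramble--Hilbert lemma converts the vanishing of $T$ on constants into control by the $H^1$ seminorm; everything else is routine scaling bookkeeping. The point I would be most careful about is verifying that all the hidden constants in $\lesssim$ depend only on the shape-regularity parameter and on $\hat K$, and in particular that the correct single power of $h_K$ emerges after collecting the factors $\|B\|^2$, $|f|$ and $|K|^{-1}$.
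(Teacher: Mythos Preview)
Your argument is correct: the reduction to a scalar function, the pullback to a reference tetrahedron, the Bramble--Hilbert step on $\hat K$ using that $T$ annihilates constants, and the scaling bookkeeping all go through, and the factors $|f|\cdot\|B\|^2/|K|\sim h_K^2\cdot h_K^2/h_K^3=h_K$ produce exactly the stated power under shape regularity.

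As for comparison with the paper: there is nothing to compare, since the paper does not prove this lemma. It simply introduces Lemmas~\ref{face_estimate_lemma} and~\ref{trace_lemma} with the sentence ``The following two lemmas are standard results'' and states them without proof. Your write-up therefore supplies what the paper takes for granted; the approach you chose (affine scaling plus Bramble--Hilbert on the reference element) is indeed the standard one for such Poincar\'e-type trace estimates.
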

\begin{lemma}
$$
\int_{\partial K}|\mathbf{w}|^2 \;dA \lesssim
h_K^{-1}\norm{\mathbf{w}}_{0,K}^2+h_K|
\mathbf{w}|_{1,K}^2.
$$\label{trace_lemma}
\end{lemma}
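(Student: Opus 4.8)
The plan is to prove this by a standard scaling (homogeneity) argument: reduce the inequality to the classical trace theorem on a fixed reference tetrahedron via an affine change of variables, and then bookkeep the powers of $h_K$ that the transformation introduces. Since $\mathbf{w}\in(H^1(K))^3$ is vector-valued, it suffices to prove the estimate for each scalar component and sum the three resulting inequalities, so I work with a scalar $w\in H^1(K)$ below.

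First I would fix a reference tetrahedron $\hat K$ and write the affine map $F:\hat K\to K$, $F(\hat x)=B\hat x+b$, setting $\hat w(\hat x)=w(F(\hat x))$. Under the usual shape-regularity assumption on $\mathcal{T}_h$ one has $\|B\|\lesssim h_K$, $\|B^{-1}\|\lesssim h_K^{-1}$, and $|\det B|\sim h_K^3$. On the fixed element $\hat K$ the classical trace theorem gives $\int_{\partial\hat K}|\hat w|^2\,d\hat A\lesssim \norm{\hat w}_{0,\hat K}^2+|\hat w|_{1,\hat K}^2$, with a constant depending only on $\hat K$.

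Next I would transport each term back to $K$ by tracking the Jacobians. The surface measure scales like $h_K^2$, so $\int_{\partial K}|w|^2\,dA\sim h_K^2\int_{\partial\hat K}|\hat w|^2\,d\hat A$; the volume measure scales like $h_K^3$, giving $\norm{\hat w}_{0,\hat K}^2\sim h_K^{-3}\norm{w}_{0,K}^2$; and since $\hat\nabla\hat w=B^{T}(\nabla w)\circ F$, the chain rule together with $\|B\|\lesssim h_K$ yields $|\hat w|_{1,\hat K}^2\lesssim h_K^{-1}|w|_{1,K}^2$. Substituting these into the reference trace inequality and multiplying through by $h_K^2$ produces exactly $\int_{\partial K}|w|^2\,dA\lesssim h_K^{-1}\norm{w}_{0,K}^2+h_K|w|_{1,K}^2$, and summing over the three components gives the stated vector inequality.

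The argument is essentially routine; the only point requiring care is the bookkeeping of the scaling exponents and the verification that the hidden constants are uniform over the mesh. The latter is where shape-regularity of $\mathcal{T}_h$ is indispensable: it is what guarantees the uniform bounds $\|B\|\lesssim h_K$ and $|\det B|\sim h_K^3$, and hence that the constant inherited from the reference trace theorem does not degenerate as the mesh is refined. I expect this to be the main (and essentially the only) obstacle, and it is dispatched by the standard shape-regularity hypothesis.
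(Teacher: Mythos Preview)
Your scaling argument is correct and is precisely the standard proof of this local trace inequality. The paper itself does not give a proof: it simply records the lemma as a ``standard result'' and uses it later. Your bookkeeping of the exponents ($h_K^2$ for the surface measure, $h_K^3$ for the volume, and $h_K^{-1}$ for the $H^1$-seminorm after the chain rule) is accurate, and your remark that shape-regularity is what makes the constants uniform is exactly the point.
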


Next, we estimate the interpolation error and consistency error separately.

\subsubsection{Interpolation error estimate}

Let $K$ and $K'_f$ be the two tetrahedra sharing a common face $f$, $r_K$ be the local interpolation operator for the second order N\'{e}d\'{e}lec element of the first family, namely, given $\mathbf{u}\in V$, define $r_K \mathbf{u}$ such that
$$
\int_e r_K \mathbf{u}\cdot \tau\;ds = \int_e \mathbf{u}\cdot\tau\;ds,\;\forall\;\text{edge}\;
e\subset K,
$$
and
$$
\int_f (r_K \mathbf{u}\times \mathbf{n})
\cdot q\;dA = \int_f (\mathbf{u}\times \mathbf{n})\cdot \mathbf{q}\;dA,\;\forall\;\mathbf{q}\in (P_0(f))^2,\;\forall\;\text{face}\;f\subset K.
$$
Define $\mathbf{u}_I\in V_h$ such that
$$
M_e(\mathbf{u}_I)=M_e(r_K \mathbf{u})=M_e(\mathbf{u}),
$$
$$
M_f(\mathbf{u}_I)=[M_f(r_{K} \mathbf{u})+M_f(r_{K'_f} \mathbf{u})]/2.
$$

If $f\subset \partial \Omega$, we set $M_f(\mathbf{u}_I)=M_f(r_{K} \mathbf{u})$.

\begin{lemma} Given $\mathbf{u}\in V$, let $\mathbf{u}_I$ be defined as above, then
$$
\|\mathbf{u}-\mathbf{u}_I\|_h \lesssim h(|\mathbf{u}|_2+|\nabla\times \mathbf{u}|_2).
$$
\end{lemma}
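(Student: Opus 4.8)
The plan is to split the interpolation error into the standard N\'ed\'elec interpolation error plus a nonconforming correction coming from the averaging in the definition of the face degrees of freedom. Writing $\mathbf{u} - \mathbf{u}_I = (\mathbf{u} - r_K\mathbf{u}) + (r_K\mathbf{u} - \mathbf{u}_I)$ on each $K$, and noting that both pieces lie in $R_2(K)$, I would estimate the two contributions to $\|\cdot\|_h$ separately and then sum over $\mathcal{T}_h$. Throughout I abbreviate $\mathbf{w} = \nabla\times\mathbf{u}$ and use shape-regularity so that $|K|\sim h_K^3$, $|f|\sim h_K^2$, and $|\mathbf{q}_{ij}|\sim h_K$.

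For the first piece, the key tool is the commuting diagram $\nabla\times r_K\mathbf{u} = R_K\mathbf{w}$, where $R_K$ is the $H(\mathrm{div})$-conforming interpolant associated with the second order N\'ed\'elec element; since $R_K$ reproduces $\mathbf{P}_1$, standard interpolation estimates give $\|\mathbf{w}-R_K\mathbf{w}\|_{0,K}\lesssim h|\mathbf{w}|_{1,K}$ and $|\mathbf{w}-R_K\mathbf{w}|_{1,K}\lesssim h|\mathbf{w}|_{2,K}$. Hence the curl term $\|\nabla\times(\mathbf{u}-r_K\mathbf{u})\|_{0,K}=\|\mathbf{w}-R_K\mathbf{w}\|_{0,K}$ and the gradient-of-curl term $\|\nabla(\nabla\times(\mathbf{u}-r_K\mathbf{u}))\|_{0,K}=|\mathbf{w}-R_K\mathbf{w}|_{1,K}$ are both controlled by $h(|\mathbf{u}|_2+|\mathbf{w}|_2)$, while the $L^2$ term is handled by the classical bound $\|\mathbf{u}-r_K\mathbf{u}\|_{0,K}\lesssim h|\mathbf{u}|_{1,K}$. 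The gradient-of-curl term is the bottleneck and explains why only first order in $h$ is claimed.

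The nonconforming correction $\mathbf{e}_K := r_K\mathbf{u}-\mathbf{u}_I$ is the heart of the estimate. By construction its edge degrees of freedom vanish and its face degrees of freedom equal half the jump $\tfrac12[M_f(r_K\mathbf{u})-M_f(r_{K'_f}\mathbf{u})]$, so $\mathbf{e}_K=\sum_f c_{f}\,\phi_{f}$ with $\phi_f$ ranging over the face basis functions and $c_f$ these half-jumps. To bound the coefficients I would exploit that $\mathbf{u}\in V$ forces $\mathbf{w}=\nabla\times\mathbf{u}\in H^1_0(\Omega)$ to be single-valued across $f$, so the genuinely defined quantity $M_f^{\mathrm{ex}}(\mathbf{u})=\frac{1}{|f|^2}\int_f \mathbf{w}\times\mathbf{n}\cdot\mathbf{q}\,dA$ may be inserted to write $c_f=\tfrac12\big([M_f(r_K\mathbf{u})-M_f^{\mathrm{ex}}(\mathbf{u})]-[M_f(r_{K'_f}\mathbf{u})-M_f^{\mathrm{ex}}(\mathbf{u})]\big)$. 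Each bracket equals $\frac{1}{|f|^2}\int_f(R_K\mathbf{w}-\mathbf{w})\times\mathbf{n}\cdot\mathbf{q}\,dA$ (using the commuting diagram again); a Cauchy--Schwarz estimate together with $|\mathbf{q}|\sim h$ and $|f|\sim h^2$ gives $|c_f|\lesssim h^{-2}\|R_K\mathbf{w}-\mathbf{w}\|_{0,f}$, and the trace inequality (Lemma \ref{trace_lemma}) combined with the interpolation bounds above yields $\|R_K\mathbf{w}-\mathbf{w}\|_{0,f}\lesssim h^{3/2}|\mathbf{w}|_{2,\omega_f}$, where $\omega_f=K\cup K'_f$. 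Thus $|c_f|\lesssim h^{-1/2}|\mathbf{w}|_{2,\omega_f}$.

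Finally I would combine this coefficient bound with the explicit scaling of the face basis functions $\phi_{lij}=3|K|(\mathbf{L}_{lij}-\mathbf{L}_{ljk})$: since $|K|\sim h^3$ and the $\mathbf{L}$'s are quadratic in the barycentric coordinates, one finds $\|\phi_f\|_{0,K}\sim h^{7/2}$, $\|\nabla\times\phi_f\|_{0,K}\sim h^{5/2}$, and $\|\nabla(\nabla\times\phi_f)\|_{0,K}\sim h^{3/2}$, so $\|\phi_f\|_{h,K}\lesssim h^{3/2}$. Hence $\|\mathbf{e}_K\|_{h,K}\lesssim\sum_f|c_f|\,\|\phi_f\|_{h,K}\lesssim h^{-1/2}\cdot h^{3/2}\,|\mathbf{w}|_{2,\omega_f}=h|\mathbf{w}|_{2,\omega_f}$. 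Summing both pieces over $K\in\mathcal{T}_h$ and using finite overlap of the patches $\omega_f$ gives $\|\mathbf{u}-\mathbf{u}_I\|_h\lesssim h(|\mathbf{u}|_2+|\nabla\times\mathbf{u}|_2)$. The main obstacle I anticipate is the correction term: getting the powers of $h$ to balance requires both the precise $1/|f|^2$ scaling built into the face degrees of freedom and the continuity of $\nabla\times\mathbf{u}$ (from $\mathbf{u}\in V$) to convert the face-moment jump into a sum of two single-element interpolation defects; the boundary faces, where $M_f(\mathbf{u}_I)=M_f(r_K\mathbf{u})$, must be checked separately but are easier since no jump arises there.
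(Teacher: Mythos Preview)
Your argument is correct and follows essentially the same route as the paper: split $\mathbf{u}-\mathbf{u}_I=(\mathbf{u}-r_h\mathbf{u})+(r_h\mathbf{u}-\mathbf{u}_I)$, handle the first piece by N\'ed\'elec interpolation estimates, expand the second piece in the face basis functions $\phi_{mnp}$, bound the coefficients by inserting the single-valued $\nabla\times\mathbf{u}$ and applying Cauchy--Schwarz together with the trace inequality and the $1/|f|^2$ scaling, and then use the explicit $h$-scaling of the $\phi_{mnp}$. The only cosmetic differences are that the paper bounds $\|r_K\mathbf{u}-\mathbf{u}_I\|_{0,K}$ first and recovers the higher norms via an inverse inequality (whereas you scale each norm of $\phi_f$ directly), and the paper does not name the commuting diagram explicitly.
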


\begin{proof}
Let $r_h \mathbf{u}$ be the global interpolation operator defined by $r_h \mathbf{u}|_K = r_K \mathbf{u}$.
By triangle inequality,
$$
\|\mathbf{u}-\mathbf{u}_I\|_h\leq \|\mathbf{u}-r_h \mathbf{u}\|_h+\|r_h \mathbf{u} - \mathbf{u}_I\|_h.
$$
By the interpolation error estimate of N\'{e}d\'{e}lec element, we have
$$
\|\mathbf{u} - r_h \mathbf{u}\|_h \lesssim h(|\mathbf{u}|_2+|\nabla\times \mathbf{u}|_2).
$$

Notice that on each tetrahedron $K$,
$$
r_K \mathbf{u} - \mathbf{u}_I = \sum_{f\subset K}\sum M_{mnp} (r_K \mathbf{u} - \mathbf{u}_I)\phi_{mnp},
$$
where $\{\phi_{mnp}\}$ are basis functions on face $f$, and $\{M_{mnp}(\cdot)\}$ are degrees of freedom on face $f$.
Using Lemma \ref{trace_lemma} and $\|q_{np}\|_{L^2(f)} = O(h^2)$, we get
\begin{eqnarray*}
&&2|M_{mnp}(r_K \mathbf{u} - \mathbf{u}_I)|=|M_{mnp}(r_{K'_f} \mathbf{u})-M_{mnp}(r_{K} \mathbf{u})|\\
&&=\frac{1}{|f|^2}\bigg|\int_f(\nabla\times r_{K'_f} \mathbf{u} - \nabla\times r_{K} \mathbf{u} )\times \mathbf{n}\cdot \mathbf{q}_{np}\;dA\bigg|\\
&&\leq \frac{1}{|f|^2}\bigg|\int_f (\nabla\times(r_{K'_f}\mathbf{u} - \mathbf{u})\times \mathbf{n} \cdot \mathbf{q}_{np}\;dA\bigg| +  \frac{1}{|f|^2}\bigg|\int_f (\nabla\times(r_{K}\mathbf{u} - \mathbf{u})\times \mathbf{n} \cdot \mathbf{q}_{np}\;dA\bigg| \\
&&\lesssim \frac{\|\mathbf{q}_{np}\|_{L^2(f)}}{|f|^2}
(\|\nabla\times(r_{K'_f}\mathbf{u} - \mathbf{u})\times \mathbf{n}\|_{L^2(f)} +\|\nabla\times (r_{K}\mathbf{u} - \mathbf{u})\times \mathbf{n}\|_{L^2(f)})\\
&&\lesssim h^{-2}(h^{-1/2}\|\nabla\times(r_{K'_f}\mathbf{u} - \mathbf{u})\|_{0, K\cup K'} + h^{1/2}| \nabla\times (r_{K}\mathbf{u} - \mathbf{u})|_{1,K\cup K'})\\
&&\lesssim h^{-1/2}|\nabla\times \mathbf{u}|_{2,K\cup K'}.
\end{eqnarray*}

Notice $\nabla\lambda_i=O(h^{-1})$, and $\|\phi_{mnp}\|_{0,K}^2=O(h^7)$, by Cauchy-Schwarz inequality, we have
\begin{eqnarray*}
\|r_K \mathbf{u} - \mathbf{u}_I\|_{0,K} &\leq &\left(\sum_{f\subset K}\sum |M_{mnp}(r_K \mathbf{u} - r_I \mathbf{u})|^2\right)^{1/2}\left( \sum_{f\subset K}\sum \|\phi_{mnp}\|^2_{0,K}\right)^{1/2}\\
& \lesssim & h^3|\nabla\times \mathbf{u}|_{2,S(K)},
\end{eqnarray*}
where $S(K)=\cup_{K'\in\mathcal{T}_h, K'\cap K\neq \emptyset} K'$.

Hence,
\begin{equation}
\|r_h \mathbf{u} - \mathbf{u}_I\|_{0,\Omega} = (\sum_K \|r_K \mathbf{u} - \mathbf{u}_I\|^2_{0,K})^{1/2}\lesssim h^3|\nabla\times \mathbf{u}|_{2,\Omega}.
\label{ruuI}
\end{equation}
By inverse inequality, we have
$$
\|\nabla\times (r_h \mathbf{u} - \mathbf{u}_I)\|_{0,\Omega}\lesssim h^2|\nabla\times \mathbf{u}|_{2,\Omega},
$$
$$
\|\nabla(\nabla\times (r_h \mathbf{u} - \mathbf{u}_I))\|_{0,\Omega}\lesssim h|\nabla\times \mathbf{u}|_{2,\Omega}.
$$
Combining these estimates, we get
$$
\|r_h \mathbf{u} - \mathbf{u}_I\|_h\lesssim h|\nabla\times \mathbf{u}|_{2,\Omega},
$$
and the desired estimate follows.
\end{proof}

Remark: We note that the error estimate (\ref{ruuI}) indicates that $r_h \mathbf{u}$ and $\mathbf{u}_I$ are super-close. Such type of estimate can not usually be obtained by the standard scaling argument (using Bramble-Hilbert lemma). In our proof, we made use of the detailed information of the basis functions constructed in the previous section.

\subsubsection{Consistency error estimate}

Given a tetrahedron $K$, in addition to the local interpolation operator $r_K$, we introduce another local interpolation operator $\tilde{r}_K$ corresponding to the first order N\'{e}d\'{e}lec element of the second family, namely, $\tilde{r}_K \mathbf{u}\in (P_1(K))^3\subset R_2(K)$, and
$$
\int_e ((\tilde{r}_K \mathbf{u})\cdot \tau ) \;q\;ds = \int_e
(\mathbf{u}\cdot\tau)\; q\;ds,\;\forall\;q\in P_1(e),\;\forall\;\text{edge}\;e\subset K.
$$

Consider two tetrahedra $K$ and $K'_f$ that share a common face $f$. Given $\mathbf{v}_h\in V_h$, define $\mathbf{v}_K=\mathbf{v}_h|_K$. By definition,
$$
\tilde{r}_{K}\mathbf{v}_{K} = \tilde{r}_{K'_f}\mathbf{v}_{K'_f},\;\text{on face}\;f.
$$
Hence,
%\begin{equation*}
%v_{K}\times n - v_{K'_f}\times n = (v_{K}-\tilde{r}_{K}v_{K})\times n - (v_{K'_f}-\tilde{r}_{K'_f}v_{K'_f})\times n,
%\end{equation*}
\begin{equation}
\sum_K \int_{\partial K} \varphi\cdot [(\tilde{r}_K\mathbf{v}_K)\times \mathbf{n}] \;dA = 0,\;\forall\;\varphi\in H(\text{curl};\Omega),
\label{nedelec face equality}
\end{equation}
where $\mathbf{n}$ is the unit outward normal vector of $\partial K$.

Consider the decomposition (see \cite{Girault:1986fk}):
$$
\mathbf{v}_K = \nabla p_K + \mathbf{w}_K,
$$
where $\text{div}\;\mathbf{w}_K = 0$, $\mathbf{w}_K\cdot \mathbf{n}|_{\partial K}=0$, and $p_K\in P_2(K)$.
The following Lemma \ref{hu-zou-lemma} can be found in \cite{Hu:2004fp}:
\begin{lemma}\
$$
\|\tilde{r}_K \mathbf{w}_K - \mathbf{w}_K\|_{0,K}\lesssim h\|\nabla\times \mathbf{v}_K\|_{0,K}.
$$
\label{hu-zou-lemma}
\end{lemma}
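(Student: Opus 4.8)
The plan is to prove the sharper, more transparent statement that
\[
\|\tilde{r}_K\mathbf{z}-\mathbf{z}\|_{0,K}\lesssim h\,\|\nabla\times\mathbf{z}\|_{0,K}\qquad\text{for every }\mathbf{z}\in R_2(K),
\]
and then to specialize to $\mathbf{z}=\mathbf{w}_K$. This is legitimate because the decomposition $\mathbf{v}_K=\nabla p_K+\mathbf{w}_K$ with $p_K\in P_2(K)$ gives $\nabla p_K\in(P_1(K))^3\subset R_2(K)$, hence $\mathbf{w}_K=\mathbf{v}_K-\nabla p_K\in R_2(K)$, while $\nabla\times\mathbf{w}_K=\nabla\times\mathbf{v}_K$. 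I note in passing that the constraints $\mathrm{div}\,\mathbf{w}_K=0$ and $\mathbf{w}_K\cdot\mathbf{n}=0$ play no role below; only membership in $R_2(K)$ matters.

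The core of the argument is a reference-element estimate. Fix a reference tetrahedron $\hat{K}$. On the finite-dimensional space $R_2(\hat{K})$ consider the two seminorms $\hat{\mathbf{z}}\mapsto\|\tilde{r}_{\hat{K}}\hat{\mathbf{z}}-\hat{\mathbf{z}}\|_{0,\hat{K}}$ and $\hat{\mathbf{z}}\mapsto\|\widehat\nabla\times\hat{\mathbf{z}}\|_{0,\hat{K}}$. I claim the first is bounded by a constant times the second, which by equivalence of seminorms on a finite-dimensional space reduces to the kernel inclusion $\ker(\widehat\nabla\times)\subseteq\ker(\tilde{r}_{\hat{K}}-\mathrm{id})$. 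This inclusion holds for a structural reason: any $\hat{\mathbf{z}}\in R_2(\hat{K})$ with $\widehat\nabla\times\hat{\mathbf{z}}=0$ is, by Lemma~\ref{R2lemma}, a gradient $\nabla\hat{p}$ with $\hat{p}\in P_2$, hence a field in $(P_1(\hat{K}))^3$; since the first-order N\'ed\'elec element of the second family is unisolvent on $(P_1)^3$, the operator $\tilde{r}_{\hat{K}}$ reproduces such fields and $\tilde{r}_{\hat{K}}\hat{\mathbf{z}}=\hat{\mathbf{z}}$. This gives $\|\tilde{r}_{\hat{K}}\hat{\mathbf{z}}-\hat{\mathbf{z}}\|_{0,\hat{K}}\le C\,\|\widehat\nabla\times\hat{\mathbf{z}}\|_{0,\hat{K}}$ on all of $R_2(\hat{K})$.

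It remains to transport this to $K$ by the affine map $F_K(\hat{x})=B_K\hat{x}+b_K$ and the covariant (edge) transformation $\mathbf{z}=B_K^{-T}\,\hat{\mathbf{z}}\circ F_K^{-1}$. This change of variables is chosen precisely so that the tangential edge moments against $P_1$ defining $\tilde{r}_K$ are preserved, giving the commutation $\widehat{\tilde{r}_K\mathbf{z}}=\tilde{r}_{\hat{K}}\hat{\mathbf{z}}$, while the curl transforms contravariantly as $\nabla\times\mathbf{z}=(\det B_K)^{-1}B_K\,(\widehat\nabla\times\hat{\mathbf{z}})\circ F_K^{-1}$; in particular $R_2(K)$ is mapped onto $R_2(\hat{K})$ and curl-free fields to curl-free fields. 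Tracking the Jacobian factors in the two $L^2$ norms and inserting the reference estimate yields
\[
\|\tilde{r}_K\mathbf{z}-\mathbf{z}\|_{0,K}\le C\,|\det B_K|\,\|B_K^{-T}\|\,\|B_K^{-1}\|\,\|\nabla\times\mathbf{z}\|_{0,K}.
\]
For a shape-regular family one has $|\det B_K|\sim h^3$ and $\|B_K^{-1}\|=\|B_K^{-T}\|\sim h^{-1}$, so the prefactor is $\sim h$, which is exactly the claim.

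The step I expect to be the main obstacle is the scaling bookkeeping rather than the conceptual heart. One must verify carefully that the covariant transformation commutes with $\tilde{r}_K$ (this rests on $P_1(e)$ pulling back to $P_1(\hat{e})$ under $F_K$) and then combine the several powers of $B_K$ arising from the $L^2$ norm of a covariant field, from the contravariant $L^2$ norm of the curl, and from $\det B_K$, so that precisely one power of $h$ survives. By contrast, the kernel inclusion via Lemma~\ref{R2lemma} is short; the real risk of error lies in the transformation constants and in confirming shape-regular control of $\|B_K\|$, $\|B_K^{-1}\|$ and $\det B_K$.
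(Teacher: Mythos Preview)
Your proof is correct. The paper itself does not prove this lemma; it simply cites Hu and Zou~\cite{Hu:2004fp}, so there is no in-paper argument to compare against. Your route via a reference-element seminorm inequality is clean and self-contained: the kernel inclusion $\ker(\widehat\nabla\times)\subseteq\ker(\tilde r_{\hat K}-\mathrm{id})$ on $R_2(\hat K)$ follows immediately from Lemma~\ref{R2lemma} together with the fact that $\tilde r_{\hat K}$ reproduces $(P_1)^3$, and the covariant/contravariant scaling is tracked correctly to leave exactly one power of $h$.

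Two observations worth recording. First, your sharper statement $\|\tilde r_K\mathbf z-\mathbf z\|_{0,K}\lesssim h\|\nabla\times\mathbf z\|_{0,K}$ for every $\mathbf z\in R_2(K)$ makes the paper's Lemma~\ref{wk lemma} an immediate corollary, since $\mathbf v_K\in R_2(K)$ already; the Helmholtz-type decomposition $\mathbf v_K=\nabla p_K+\mathbf w_K$ becomes unnecessary. Second, as you point out, the side conditions $\operatorname{div}\mathbf w_K=0$ and $\mathbf w_K\cdot\mathbf n|_{\partial K}=0$ play no role in your argument, whereas the cited result in~\cite{Hu:2004fp} is formulated for that specific decomposition; your version is therefore both more general and more transparent.
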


As a consequence of Lemma \ref{hu-zou-lemma}, we have the following estimate.

\begin{lemma}
$$
\|\tilde{r}_K \mathbf{v}_K - \mathbf{v}_K\|_{0,K}\lesssim h\|\nabla\times \mathbf{v}_K\|_{0,K}.
$$
\label{wk lemma}
\end{lemma}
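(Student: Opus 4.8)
The plan is to exploit linearity of $\tilde{r}_K$ together with the decomposition $\mathbf{v}_K = \nabla p_K + \mathbf{w}_K$ already introduced, and reduce the claim to the previously established Lemma \ref{hu-zou-lemma}. Since $\tilde{r}_K$ is linear, I would first write
$$
\tilde{r}_K \mathbf{v}_K - \mathbf{v}_K = \left(\tilde{r}_K(\nabla p_K) - \nabla p_K\right) + \left(\tilde{r}_K \mathbf{w}_K - \mathbf{w}_K\right),
$$
so that the estimate splits into a contribution from the gradient part $\nabla p_K$ and a contribution from the divergence-free part $\mathbf{w}_K$. The second summand is controlled directly by Lemma \ref{hu-zou-lemma}, so the whole argument rests on showing that the first summand vanishes.

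The key observation is that $\tilde{r}_K$ is the interpolation operator of the first order N\'{e}d\'{e}lec element of the second family, whose local space is the \emph{complete} vector polynomial space $(P_1(K))^3$. Its twelve edge degrees of freedom, $\int_e (\mathbf{u}\cdot\tau)\,q\,ds$ for $q\in P_1(e)$ over the six edges, are unisolvent on $(P_1(K))^3$, which has dimension $12$; consequently $\tilde{r}_K$ reproduces every function in $(P_1(K))^3$ exactly. Since $p_K\in P_2(K)$, its gradient satisfies $\nabla p_K\in (P_1(K))^3$, and therefore $\tilde{r}_K(\nabla p_K) = \nabla p_K$. This makes the first summand identically zero.

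Combining the two steps, I would conclude
$$
\|\tilde{r}_K \mathbf{v}_K - \mathbf{v}_K\|_{0,K} = \|\tilde{r}_K \mathbf{w}_K - \mathbf{w}_K\|_{0,K} \lesssim h\,\|\nabla\times \mathbf{v}_K\|_{0,K},
$$
where the inequality is precisely Lemma \ref{hu-zou-lemma}. The estimate is stated in terms of $\nabla\times\mathbf{v}_K$ rather than $\nabla\times\mathbf{w}_K$, but this causes no trouble because $\nabla\times\nabla p_K = 0$ gives $\nabla\times\mathbf{w}_K = \nabla\times\mathbf{v}_K$, which is exactly the form already appearing on the right-hand side of Lemma \ref{hu-zou-lemma}.

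I do not anticipate a genuine obstacle here: the lemma is essentially a corollary, and the only point requiring care is the polynomial-reproduction property of $\tilde{r}_K$ on $(P_1(K))^3$, which guarantees that the gradient part is left invariant so that the problem collapses onto the divergence-free estimate already granted by Lemma \ref{hu-zou-lemma}.
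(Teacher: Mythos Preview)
Your proof is correct and follows essentially the same route as the paper: split $\mathbf{v}_K=\nabla p_K+\mathbf{w}_K$, use that $\tilde{r}_K$ reproduces $\nabla p_K\in (P_1(K))^3$ exactly so the gradient part cancels, and then invoke Lemma~\ref{hu-zou-lemma} on the remaining $\mathbf{w}_K$-part. Your justification of $\tilde{r}_K(\nabla p_K)=\nabla p_K$ via polynomial reproduction on $(P_1(K))^3$ is precisely the point the paper uses (though stated more tersely there).
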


\begin{proof}
Using the interpolation operators defined above, we obtain
$$
\tilde{r}_K\mathbf{v}_K = \tilde{r}_K\nabla p_K + \tilde{r}_K\mathbf{w}_K = \nabla p_K +\tilde{r}_K\mathbf{w}_K.
$$
Hence,
$$
\tilde{r}_K \mathbf{v}_K- \mathbf{v}_K = \tilde{r}_K \mathbf{w}_K - \mathbf{w}_K.
$$

By Lemma \ref{hu-zou-lemma}, we obtain
$$
\|\tilde{r}_K \mathbf{v}_K - \mathbf{v}_K\|_{0,K}\lesssim h\|\nabla\times \mathbf{v}_K\|_{0,K}.
$$
\end{proof}

Now, we can show the following lemma, which is critical for the consistency error estimate.

\begin{lemma} For $\varphi\in H(\text{curl};\Omega)$,
$$
|\sum_{K}\int_{\partial K}\varphi \cdot (\mathbf{v}_h\times \mathbf{n})\;dA
\lesssim h(\|\varphi\|_{0,\Omega}+\|\nabla\times \varphi\|_{0,\Omega})\left(\sum_K\|\nabla\times \mathbf{v}_h\|_{1,K}^2\right)^{1/2}.
$$
\label{critical lemma for consistency}
\end{lemma}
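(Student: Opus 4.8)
The plan is to use the orthogonality relation (\ref{nedelec face equality}) to replace $\mathbf{v}_h$ by the local discrepancy $\mathbf{v}_h-\tilde{r}_K\mathbf{v}_K$, and then to convert every boundary integral into element integrals by integration by parts, so that the weak regularity of $\varphi\in H(\mathrm{curl};\Omega)$ is never tested as an $L^2$ trace on $\partial K$. Writing $\mathbf{v}_K=\mathbf{v}_h|_K$ and subtracting the identity (\ref{nedelec face equality}) gives
$$
\sum_K\int_{\partial K}\varphi\cdot(\mathbf{v}_h\times\mathbf{n})\,dA=\sum_K\int_{\partial K}\varphi\cdot[(\mathbf{v}_K-\tilde{r}_K\mathbf{v}_K)\times\mathbf{n}]\,dA.
$$
Setting $\mathbf{g}_K=\mathbf{v}_K-\tilde{r}_K\mathbf{v}_K$, using the triple-product identity $(\varphi\times\mathbf{n})\cdot\mathbf{g}_K=\varphi\cdot(\mathbf{g}_K\times\mathbf{n})$ and the curl integration-by-parts formula on each tetrahedron, I would write
$$
\int_{\partial K}\varphi\cdot(\mathbf{g}_K\times\mathbf{n})\,dA=\int_K(\nabla\times\varphi)\cdot\mathbf{g}_K\,dx-\int_K\varphi\cdot(\nabla\times\mathbf{g}_K)\,dx,
$$
which is legitimate because $\mathbf{g}_K$ is a polynomial and $\varphi\in H(\mathrm{curl};K)$.

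The first volume term is immediate. By Cauchy--Schwarz and Lemma \ref{wk lemma} one has $|\int_K(\nabla\times\varphi)\cdot\mathbf{g}_K\,dx|\lesssim h\,\|\nabla\times\varphi\|_{0,K}\,\|\nabla\times\mathbf{v}_K\|_{0,K}$. Summing over $K$, applying Cauchy--Schwarz in the element index, and using $\|\nabla\times\mathbf{v}_K\|_{0,K}\le\|\nabla\times\mathbf{v}_h\|_{1,K}$, this contribution is bounded by $h\,\|\nabla\times\varphi\|_{0,\Omega}\big(\sum_K\|\nabla\times\mathbf{v}_h\|_{1,K}^2\big)^{1/2}$.

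The second volume term is the crux, and the main obstacle. The naive bound $\|\nabla\times\mathbf{g}_K\|_{0,K}\lesssim\|\nabla\times\mathbf{v}_K\|_{0,K}$, obtained from Lemma \ref{wk lemma} together with an inverse inequality, lacks the needed extra factor of $h$. To recover it I would prove the sharper estimate $\|\nabla\times\mathbf{g}_K\|_{0,K}\lesssim h\,|\nabla\times\mathbf{v}_K|_{1,K}$. The key observation is that $\mathbf{g}_K\in R_2(K)$ has vanishing edge degrees of freedom, so Lemma \ref{relation-edge-face} yields $\int_f(\nabla\times\mathbf{g}_K)\cdot\mathbf{n}\,dA=0$ on every face $f\subset\partial K$. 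Now $\nabla\times\mathbf{g}_K$ is a vector polynomial of degree at most one (since $\nabla\times\tilde{r}_K\mathbf{v}_K$ is constant), all four of whose face fluxes vanish; as the face normals of a shape-regular tetrahedron span $\mathbb{R}^3$, the only constant field with this property is zero, so on the finite-dimensional space of such fields the $H^1$-seminorm is a genuine norm. A scaling argument on the reference element then gives $\|\nabla\times\mathbf{g}_K\|_{0,K}\lesssim h\,|\nabla\times\mathbf{g}_K|_{1,K}=h\,|\nabla\times\mathbf{v}_K|_{1,K}$, the last equality because the two curls differ by a constant. Consequently $|\int_K\varphi\cdot(\nabla\times\mathbf{g}_K)\,dx|\lesssim h\,\|\varphi\|_{0,K}\,|\nabla\times\mathbf{v}_K|_{1,K}$, and summing as before bounds this contribution by $h\,\|\varphi\|_{0,\Omega}\big(\sum_K\|\nabla\times\mathbf{v}_h\|_{1,K}^2\big)^{1/2}$.

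Adding the two contributions yields the claimed estimate. I expect the delicate point to be precisely the sharper second-term bound: one must resist the lossy inverse-inequality estimate and instead extract the extra power of $h$ from the vanishing face-fluxes of $\nabla\times\mathbf{g}_K$, the verification that no nonzero constant field satisfies the four flux conditions being the step that makes the Poincar\'e/scaling argument go through. The remaining steps are routine Cauchy--Schwarz bookkeeping, with the $H(\mathrm{curl})$ norm of $\varphi$ appearing only through the two volume integrals so that no trace of $\varphi$ on $\partial K$ is ever needed.
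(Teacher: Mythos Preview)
Your argument is correct and follows essentially the same route as the paper: subtract the identity~(\ref{nedelec face equality}), integrate by parts to turn the boundary term into two volume integrals, bound the first with Lemma~\ref{wk lemma}, and bound the second with an $O(h)$ estimate on $\|\nabla\times(\mathbf{v}_K-\tilde r_K\mathbf{v}_K)\|_{0,K}$. The only difference is that the paper simply invokes the standard N\'{e}d\'{e}lec interpolation error estimate $\|\nabla\times(\tilde r_K\mathbf{v}_K-\mathbf{v}_K)\|_{0,K}\lesssim h\,\|\nabla\times\mathbf{v}_K\|_{1,K}$ as a known fact, whereas you re-derive it from scratch via the vanishing face-flux/Poincar\'e argument; your derivation is sound (the four face normals do kill every constant, and the scaling then gives the factor $h$), but it is not a genuinely different strategy---it is a self-contained proof of the very estimate the paper cites.
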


\begin{proof}
By the interpolation error estimates of the N\'{e}d\'{e}lec elements
$$
\|\nabla\times (\tilde{r}_K\mathbf{v}_K -\mathbf{v}_K)\|_{0,K}\lesssim h \|\nabla\times \mathbf{v}_K\|_{1,K},
$$
Lemma \ref{wk lemma}, and Equation (\ref{nedelec face equality}), we have
\begin{eqnarray*}
&&\bigg|\sum_K \int_{\partial K}\varphi\cdot (\mathbf{v}_K\times \mathbf{n})\;dA\bigg|
= \bigg|\sum_K\int_{\partial K}\varphi\cdot [(\tilde{r}_K \mathbf{v}_K-\mathbf{v}_K)\times \mathbf{n}]\;dA\bigg|\\
&&= \bigg|\sum_K\int_K (\nabla\times \varphi)\cdot (\tilde{r}_K\mathbf{v}_K-\mathbf{v}_K)\;dx
- \varphi\cdot [\nabla\times (\tilde{r}_K\mathbf{v}_K-\mathbf{v}_K)]\;dx\bigg| \\
&&\leq \sum_K(\|\nabla\times\varphi\|_{0,K}\|\tilde{r}_K
\mathbf{v}_K-\mathbf{v}_K\|_{0,K}
+\|\varphi\|_{0,K}\|\nabla\times(\tilde{r}_K
\mathbf{v}_K-\mathbf{v}_K)\|_{0,K})\\
&&\lesssim h(\|\varphi\|_{0,\Omega}+
\|\nabla\times\varphi\|_{0,\Omega})
\left(\sum_K\|\nabla\times \mathbf{v}_h\|_{1,K}^2\right)^{1/2}.
\end{eqnarray*}
\end{proof}

Next, we show the consistency error estimate for the nonconforming finite element approximation defined above.

\begin{theorem}
Assume that $\mathbf{u}\in V$ is sufficiently smooth and $\mathbf{v}_h\in V_h$, then
\begin{multline*}
|a_h(\mathbf{u},\mathbf{v}_h)-(\mathbf{f},\mathbf{v}_h)|
 \lesssim h(\| \nabla\times \Delta(\nabla\times \mathbf{u})\|+|\nabla(\nabla\times \mathbf{u})|_1\\+\|(\nabla\times)^2 \mathbf{u}\|+\|\nabla\times \mathbf{u}\|)\left(\sum_K\|\nabla\times
 \mathbf{v}_h\|_{1,K}^2\right)^{1/2}.
\end{multline*}
\end{theorem}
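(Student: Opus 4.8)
The plan is to reduce the consistency error to a sum of inter-element boundary terms by integrating by parts against the strong form, and then to estimate those boundary terms using Lemma~\ref{critical lemma for consistency} and Lemma~\ref{face_estimate_lemma}. Since $\mathbf{u}$ solves (\ref{delta_curlcurl}), I would write $(\mathbf{f},\mathbf{v}_h)=\sum_{K}\int_{K}\mathbf{f}\cdot\mathbf{v}_h\,dx$ and integrate by parts on each tetrahedron $K$: the $\gamma$-term is left untouched; for the $\beta$-term I apply the curl Green identity $\int_K(\nabla\times\mathbf{w})\cdot\mathbf{v}\,dx=\int_K\mathbf{w}\cdot(\nabla\times\mathbf{v})\,dx+\int_{\partial K}(\mathbf{w}\times\mathbf{v})\cdot\mathbf{n}\,dA$ once with $\mathbf{w}=\nabla\times\mathbf{u}$; for the $\alpha$-term I apply it once with $\mathbf{w}=\Delta(\nabla\times\mathbf{u})$ and then the scalar (Laplacian) Green identity to move the remaining derivatives onto $\nabla\times\mathbf{v}_h$. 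The interior volume integrals reassemble exactly into $a_h(\mathbf{u},\mathbf{v}_h)$, leaving (with signs that do not affect the bound)
\[
a_h(\mathbf{u},\mathbf{v}_h)-(\mathbf{f},\mathbf{v}_h)=\sum_K\int_{\partial K}\Big[\beta(\nabla\times\mathbf{u})\cdot(\mathbf{v}_h\times\mathbf{n})+\alpha\,\Delta(\nabla\times\mathbf{u})\cdot(\mathbf{v}_h\times\mathbf{n})-\alpha\,\partial_{n}(\nabla\times\mathbf{u})\cdot(\nabla\times\mathbf{v}_h)\Big]\,dA,
\]
where I have used $(\mathbf{a}\times\mathbf{b})\cdot\mathbf{n}=\mathbf{a}\cdot(\mathbf{b}\times\mathbf{n})$ to recast the triple products.

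The first two boundary sums are precisely of the form treated by Lemma~\ref{critical lemma for consistency}. Applying it with $\varphi=\nabla\times\mathbf{u}$ bounds the $\beta$-sum by $h(\|\nabla\times\mathbf{u}\|+\|(\nabla\times)^2\mathbf{u}\|)(\sum_K\|\nabla\times\mathbf{v}_h\|_{1,K}^2)^{1/2}$, and applying it with $\varphi=\Delta(\nabla\times\mathbf{u})$ (which lies in $H(\text{curl};\Omega)$ for $\mathbf{u}$ sufficiently smooth, since $\nabla\times\Delta(\nabla\times\mathbf{u})\in L^2(\Omega)$) bounds the second $\alpha$-sum by $h(\|\Delta(\nabla\times\mathbf{u})\|+\|\nabla\times\Delta(\nabla\times\mathbf{u})\|)(\sum_K\|\nabla\times\mathbf{v}_h\|_{1,K}^2)^{1/2}$. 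Since $\|\Delta(\nabla\times\mathbf{u})\|\lesssim|\nabla(\nabla\times\mathbf{u})|_{1}$, these two contributions already account for all four data norms on the right-hand side.

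The remaining term, $\alpha\sum_K\int_{\partial K}\partial_n(\nabla\times\mathbf{u})\cdot(\nabla\times\mathbf{v}_h)\,dA$, is the crux: it involves $\nabla\times\mathbf{v}_h$ restricted to faces rather than the tangential trace $\mathbf{v}_h\times\mathbf{n}$, so Lemma~\ref{critical lemma for consistency} does not apply. I would handle it by pairing the two sides of each interior face $f=\partial K\cap\partial K'$. Because the exact solution is conforming and smooth, $\nabla(\nabla\times\mathbf{u})$ is single-valued on $f$ while the two outward normals are opposite, so the two contributions combine into one integral of $\partial_n(\nabla\times\mathbf{u})$ against the jump $\nabla\times\mathbf{v}_h|_K-\nabla\times\mathbf{v}_h|_{K'}$. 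The decisive structural fact is that $\int_f(\nabla\times\mathbf{v}_h)\,dA$ is continuous across $f$ (this is exactly what the face degrees of freedom together with Lemma~\ref{relation-edge-face} guarantee), so the jump has zero mean on $f$. I may therefore subtract the constant $P_f\big(\partial_n(\nabla\times\mathbf{u})\big)$ from the data without changing the integral, and observe that the jump equals the difference of the one-sided deviations $\nabla\times\mathbf{v}_h|_K-P_f(\nabla\times\mathbf{v}_h)$ and $\nabla\times\mathbf{v}_h|_{K'}-P_f(\nabla\times\mathbf{v}_h)$, whose $P_f$ parts coincide. Cauchy--Schwarz on $f$ then splits the integral into two factors, to each of which Lemma~\ref{face_estimate_lemma} applies: the data factor gives $h^{1/2}|\nabla(\nabla\times\mathbf{u})|_{1,K}$ and the jump factor gives $h^{1/2}(|\nabla\times\mathbf{v}_h|_{1,K}+|\nabla\times\mathbf{v}_h|_{1,K'})$, whose product is $O(h)$; summing over faces with Cauchy--Schwarz and shape regularity yields $h\,|\nabla(\nabla\times\mathbf{u})|_{1}(\sum_K\|\nabla\times\mathbf{v}_h\|_{1,K}^2)^{1/2}$. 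Boundary faces are treated the same way, using that the boundary degrees of freedom vanish for $\mathbf{v}_h\in V_h$, whence $P_f(\nabla\times\mathbf{v}_h)=0$ there and the single-sided integral is still controlled.

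The main obstacle is exactly this last term: everything hinges on the continuity of $\int_f\nabla\times\mathbf{v}_h\,dA$, the very property the nonstandard face degrees of freedom were designed to enforce, and on having the two $O(h^{1/2})$ gains from Lemma~\ref{face_estimate_lemma} (one from the smooth data, one from the nonconforming jump) multiply to the required $O(h)$. Collecting the three estimates and absorbing the fixed constants $\alpha,\beta$ into $\lesssim$ gives the stated inequality.
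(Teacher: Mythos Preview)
Your proof is correct and follows essentially the same approach as the paper: the identical three boundary terms emerge from integration by parts, the tangential-trace terms are handled by Lemma~\ref{critical lemma for consistency} with $\varphi=\nabla\times\mathbf{u}$ and $\varphi=\Delta(\nabla\times\mathbf{u})$, and the normal-derivative term is reduced via the continuity of $P_f(\nabla\times\mathbf{v}_h)$ to a product of $(\cdot-P_f)$ factors estimated by Lemma~\ref{face_estimate_lemma}. Your jump-based phrasing of the last term is slightly more explicit than the paper's terse double-subtraction inequality, but the argument is the same.
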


\begin{proof}
By applying integration by parts, we get
\begin{eqnarray*}
&&\;\;\;\;\left(\nabla(\nabla\times \mathbf{u}),\nabla(\nabla\times \mathbf{v}_h)\right)_K \\&=& -(\Delta(\nabla\times \mathbf{u}),\nabla\times \mathbf{v}_h)_K+(\nabla(\nabla\times \mathbf{u})\cdot \mathbf{n},\nabla\times \mathbf{v}_h)_{\partial K}\\
&=&-(\nabla\times\Delta(\nabla\times \mathbf{u}), \mathbf{v}_h)_K
+(\Delta (\nabla\times \mathbf{u}),\mathbf{v}_h\times \mathbf{n})_{\partial K}\\
&&+(\nabla(\nabla\times \mathbf{u})\cdot \mathbf{n},\nabla\times \mathbf{v}_h)_{\partial K},
\end{eqnarray*}
and
$$
(\nabla\times \mathbf{u},\nabla\times \mathbf{v}_h)_K
=((\nabla\times)^2 \mathbf{u},\mathbf{v}_h)_K-(\nabla\times \mathbf{u}, \mathbf{v}_h\times \mathbf{n})_{\partial K}.
$$

Hence,
\begin{eqnarray*}
&&\;\;\;\; a_h(\mathbf{u},\mathbf{v}_h)-(\mathbf{f},\mathbf{v}_h)\\
&&=\sum_{K\in\mathcal{T}_h}[\alpha(\Delta(\nabla\times \mathbf{u}), \mathbf{v}_h \times
\mathbf{n})_{\partial K}+\alpha(\nabla(\nabla\times \mathbf{u})\cdot \mathbf{n},\nabla\times
\mathbf{v}_h)_{\partial K}\\
 && \quad\quad\quad\quad- \beta(\nabla\times \mathbf{u},\mathbf{v}_h\times \mathbf{n})_{\partial K}]\\
&&=\sum_{K\in\mathcal{T}_h}\left[(\alpha \Delta (\nabla\times \mathbf{u})-\beta\nabla\times \mathbf{u},\mathbf{v}_h\times \mathbf{n})_{\partial K}\right]\\
&&\quad\quad+\sum_{K\in\mathcal{T}_h}\left[
\alpha(\nabla(\nabla\times \mathbf{u})\cdot \mathbf{n}, \nabla\times \mathbf{v}_h)_{\partial K}\right].\label{int-by-parts}
\end{eqnarray*}

By Lemma \ref{critical lemma for consistency}, we have
\begin{eqnarray*}
&&\sum_{K\in\mathcal{T}_h}\left[(\alpha \Delta (\nabla\times \mathbf{u})-\beta\nabla\times \mathbf{u},\mathbf{v}_h\times \mathbf{n})_{\partial K}\right]\\
&& \lesssim h(\|\Delta(\nabla\times \mathbf{u})\|_{0,\Omega}+\|\nabla\times \Delta(\nabla\times \mathbf{u})\|_{0,\Omega}+\|\nabla\times \mathbf{u}\|_{0,\Omega}+\|(\nabla\times)^2 \mathbf{u}\|_{0,\Omega}
)\\
&&\hskip 3mm \left(\sum_K\|\nabla\times \mathbf{v}_h\|_{1,K}^2\right)^{1/2}.
\end{eqnarray*}

By Lemma \ref{face_estimate_lemma} and the inter-element continuity of $\nabla\times \mathbf{v}_h$, we get
\begin{eqnarray*}
&&\sum_{K\in\mathcal{T}_h}\left[
\alpha(\nabla(\nabla\times \mathbf{u})\cdot \mathbf{n}, \nabla\times \mathbf{v}_h)_{\partial K}\right]\\
&&\leq \alpha\left| \sum_{K\in\mathcal{T}_h}\sum_{f\subset\partial K}(
\nabla(\nabla\times \mathbf{u})\cdot \mathbf{n}-P_f(\nabla(\nabla\times \mathbf{u})\cdot \mathbf{n}),\nabla\times \mathbf{v}_h-P_f(\nabla\times \mathbf{v}_h))_f\right|\\
&&\lesssim h|\nabla(\nabla\times \mathbf{u})|_{1,\Omega}\left(
|\sum_{K\in\mathcal{T}_h}|\nabla\times \mathbf{v}_h|_{1,K}^2\right)^{1/2}.
\end{eqnarray*}
The theorem follows by combining the above estimates of the two boundary integrals.
\end{proof}

Finally, we have the following convergence result.

\begin{theorem}
Let $\mathbf{u}$ and $\mathbf{u}_h$ be the solutions of the problems (3.10) and (3.16) respectively, then
$$
\norm{\mathbf{u}-\mathbf{u}_h}_{0,h}+\norm{\nabla\times
(\mathbf{u}-\mathbf{u}_h)}_{0,h}+\norm{\nabla(\nabla\times (\mathbf{u}-\mathbf{u}_h))}_{0,h}\lesssim
h \norm{\mathbf{u}}_{4,\Omega}
$$
when $\mathbf{u}\in (H^4(\Omega))^3$.\label{convergence-theorem}
\end{theorem}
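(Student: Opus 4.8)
The plan is to apply the second Strang lemma (the abstract error bound stated just before the interpolation error subsection) and control the two terms on its right-hand side using the results already established in the excerpt. The lemma gives
\[
\norm{\mathbf{u}-\mathbf{u}_h}_{h}\lesssim \inf_{\mathbf{v}_h\in V_h}\norm{\mathbf{u}-\mathbf{v}_h}_{h}+\sup_{\mathbf{w}_h\in V_h}\frac{|a_h(\mathbf{u},\mathbf{w}_h)-(\mathbf{f},\mathbf{w}_h)|}{\norm{\mathbf{w}_h}_{h}},
\]
and since the left-hand side of the theorem to be proved is exactly $\norm{\mathbf{u}-\mathbf{u}_h}_{h}$ (written out as the sum of the three broken norms), it suffices to bound both terms on the right by $h\norm{\mathbf{u}}_{4,\Omega}$.

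First I would handle the interpolation error. Taking $\mathbf{v}_h=\mathbf{u}_I$, the $V_h$-interpolant constructed in the interpolation subsection, the interpolation error lemma already gives
\[
\inf_{\mathbf{v}_h\in V_h}\norm{\mathbf{u}-\mathbf{v}_h}_{h}\le \norm{\mathbf{u}-\mathbf{u}_I}_{h}\lesssim h\bigl(|\mathbf{u}|_2+|\nabla\times\mathbf{u}|_2\bigr).
\]
Since $\mathbf{u}\in(H^4(\Omega))^3$, one has $|\mathbf{u}|_2\lesssim\norm{\mathbf{u}}_{4,\Omega}$ and $|\nabla\times\mathbf{u}|_2\lesssim\norm{\mathbf{u}}_{3,\Omega}\lesssim\norm{\mathbf{u}}_{4,\Omega}$, so this term is bounded by $h\norm{\mathbf{u}}_{4,\Omega}$ as required.

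Next I would handle the consistency error, which is precisely the quantity estimated in the preceding consistency theorem. That theorem bounds, for any $\mathbf{w}_h\in V_h$,
\[
|a_h(\mathbf{u},\mathbf{w}_h)-(\mathbf{f},\mathbf{w}_h)|\lesssim h\,C(\mathbf{u})\Bigl(\sum_K\norm{\nabla\times\mathbf{w}_h}_{1,K}^2\Bigr)^{1/2},
\]
where $C(\mathbf{u})=\norm{\nabla\times\Delta(\nabla\times\mathbf{u})}+|\nabla(\nabla\times\mathbf{u})|_1+\norm{(\nabla\times)^2\mathbf{u}}+\norm{\nabla\times\mathbf{u}}$. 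Each of these four terms involves at most four derivatives of $\mathbf{u}$ (the leading term $\nabla\times\Delta(\nabla\times\mathbf{u})$ is third order in $\nabla\times\mathbf{u}$, hence fourth order in $\mathbf{u}$), so $C(\mathbf{u})\lesssim\norm{\mathbf{u}}_{4,\Omega}$. Moreover $\bigl(\sum_K\norm{\nabla\times\mathbf{w}_h}_{1,K}^2\bigr)^{1/2}\lesssim\norm{\mathbf{w}_h}_{h}$, since the broken $H^1$-seminorm of $\nabla\times\mathbf{w}_h$ is controlled by the two curl-containing terms in $\norm{\cdot}_h$. Dividing by $\norm{\mathbf{w}_h}_{h}$ and taking the supremum therefore gives $h\norm{\mathbf{u}}_{4,\Omega}$ for the consistency term.

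Combining the two bounds through the Strang lemma yields the claimed estimate. The only genuine work is the bookkeeping that verifies $C(\mathbf{u})\lesssim\norm{\mathbf{u}}_{4,\Omega}$ and that the broken curl-seminorm is dominated by $\norm{\mathbf{w}_h}_h$; the substantive analytical content (the super-closeness estimate driving the interpolation error, and the face-integral cancellation via the second-family interpolant $\tilde r_K$ driving the consistency estimate) has already been carried out in the two preceding subsections, so this final theorem is essentially an assembly step. The main point to be careful about is confirming that no term in $C(\mathbf{u})$ secretly requires more than four derivatives, which it does not once one uses $\nabla\cdot\mathbf{u}=0$ to identify $(\nabla\times)^2\mathbf{u}=-\Delta\mathbf{u}$.
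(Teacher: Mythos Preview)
Your proposal is correct and follows essentially the same approach as the paper: apply the second Strang lemma and bound the interpolation and consistency terms using the lemmas already established. The paper's own proof is in fact much terser than yours---it just cites the Strang lemma and ``previous lemmas'' without spelling out the bookkeeping you carefully verify (that $C(\mathbf{u})\lesssim\norm{\mathbf{u}}_{4,\Omega}$ and that $\bigl(\sum_K\norm{\nabla\times\mathbf{w}_h}_{1,K}^2\bigr)^{1/2}\le\norm{\mathbf{w}_h}_h$).
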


\begin{proof}
Using the second Strang lemma,
\begin{eqnarray*}
&&\;\;\;\;\norm{\mathbf{u}-\mathbf{u}_h}_{0,h}+
\norm{\nabla\times
(\mathbf{u}-\mathbf{u}_h)}_{0,h}+
\norm{\nabla(\nabla\times (\mathbf{u}-\mathbf{u}_h))}_{0,h}\\
&&\lesssim \inf_{\mathbf{w}_h\in V_{h}}(\norm{\mathbf{u}-\mathbf{w}_h}_{0,h}+\norm{\nabla\times
(\mathbf{u}-\mathbf{w}_h)}_{0,h}+\norm{\nabla(\nabla\times (\mathbf{u}-\mathbf{w}_h))}_{0,h})
\\
&&\;\;\;\;+ \sup_{\mathbf{w}_h\in V_{h},\mathbf{w}_h\neq
0}\frac{a_h(\mathbf{u},\mathbf{w}_h)-(\mathbf{f},
\mathbf{w}_h)}{\norm{\nabla\times
\mathbf{w}_h}_{1,h}},
\end{eqnarray*}
and previous lemmas, the desired inequality follows.
\end{proof}
%-------------------

\section*{Acknowledgement}
The authors would like to thank Prof. Ludmil Zikatanov and Dr. Luis Chac\'{o}n for many helpful discussions.

\bibliographystyle{amsplain}

\end{document}